\newtheorem{thm}{Theorem}[section]
\newtheorem{prop}[thm]{Proposition}
\newtheorem{lem}[thm]{Lemma}
\newtheorem*{thmA}{Theorem A}
\newtheorem*{thmB}{Theorem B}
\newtheorem*{thmBb}{Theorem C}
\newtheorem{corollary}[thm]{Corollary}
\newtheorem*{thmC}{Theorem D}
\newtheorem{addendum}[thm]{Addendum} 
\newcommand{\refthmB}{B}
\newcommand{\refthmBb}{C}
\theoremstyle{definition}
\newtheorem{remark}[thm]{Remark}
\newcommand{\iso}{\cong}
\renewcommand{\L}{\mathcal{L}}
\newcommand{\A}{\mathcal{A}}
\newcommand{\D}{\mathbb{D}}
\newcommand{\R}{\mathcal{R}}
\newcommand{\Z}{\mathbb{Z}}
\newcommand{\N}{\mathbb{N}}
\newcommand{\G}{\mathcal{G}}
\newcommand{\Q}{\mathcal{Q}}
\renewcommand{\P}{\mathcal{P}}
\newcommand{\e}{\varepsilon}
\newcommand{\area}{{\rm{area}}}
\newcommand{\<}{\langle}
\renewcommand{\>}{\rangle}
\renewcommand{\|}{\mid}
\newcommand{\emf}[1]{\textbf{\textit{#1}}} 
\begin{document}

\title[Algorithmic construction of classifying spaces]{On
the algorithmic construction of classifying spaces and
the isomorphism problem for biautomatic groups}

\author{Martin R.~Bridson}
\address{Martin R.~Bridson\\
Mathematical Institute \\
24--29 St Giles'\\
Oxford OX1 3LB  \\ 
U.K. }
\email{bridson@maths.ox.ac.uk}
 
\author{Lawrence Reeves}
\address{Lawrence Reeves\\ Department of Mathematics and Statistics\\
University of Melbourne\\
Parkville VIC 3010 \\
Australia
}
\email{lreeves@unimelb.edu.au} 
\thanks{This research was supported by the EPSRC of Great Britain.}



\begin{abstract}
We show that the isomorphism problem is solvable in the class of
central extensions of word-hyperbolic groups, and that the
isomorphism problem for biautomatic groups reduces to that for
biautomatic groups with finite centre. We describe an algorithm
that, given an arbitrary finite presentation of an automatic
group $\Gamma$, will construct explicit finite models for the skeleta 
of $K(\Gamma,1)$ and hence compute the integral homology and cohomology of $\Gamma$.
\end{abstract}
\maketitle

\centerline{\em{For Fabrizio Catanese on his 60th birthday}}

\medskip

There are several natural classes of finitely presented groups that cluster around the
notion of non-positive curvature, ranging from hyperbolic groups to combable groups (see
\cite{mrb:icm} for a survey and references).
 The isomorphism problem is solvable in the class of
hyperbolic groups 
but is unsolvable in the class of combable groups \cite{mrb:isoC}.
It remains unknown whether the isomorphism is solvable in the intermediate classes, such as (bi)automatic groups
and {\rm{CAT}}$(0)$ groups. 
Hyperbolic groups also form one of the very few interesting classes in which there is an algorithm
that, given a finite presentation of a group $\Gamma$ in the class, will construct finite models for the skeleta
of a $K(\Gamma,1)$. For finitely presented groups in general, one cannot even calculate $H^2(\Gamma,\Z)$; see 
\cite{CG}. Our focus in this article will be on the isomorphism problem
for biautomatic groups and the construction problem for classifying spaces of combable and automatic
groups. 

We remind the reader that the isomorphism problem for a class $\mathcal G$ of
finitely presented groups is said to be solvable if there exists
an algorithm that takes as input pairs of finite group
presentations $(P_1,P_2)$ and proceeding under the
assumption that the groups $|P_i|$ belong to $\mathcal C$,
decides whether or not
$|P_1|\iso|P_2|$.
The first purpose of this article is to point out that the
isomorphism problem for biautomatic groups (or any subclass of
such groups) can be reduced to the problem of determining
isomorphism of the groups modulo their centres. We write $Z(G)$ to
denote the centre of a group $G$.

\begin{thmA}
Let $\G$ be a class of biautomatic groups. Let $\Q$ be the class
of groups $\{G/Z(G) \mid G\in\G\}$. If the isomorphism problem is
solvable in $\Q$ then it is solvable in $\G$.
\end{thmA}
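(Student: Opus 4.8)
\medskip
\noindent\emph{Proof plan.}
The plan is to reduce an isomorphism query in $\G$, in an algorithmic way, to a single isomorphism query in $\Q$ together with bookkeeping about central extensions. The structural inputs are that a biautomatic group $G$ has finitely generated abelian centre $Z(G)$, that $G/Z(G)$ is again biautomatic, and that, given a finite presentation of $G$ together with the a priori knowledge that $G$ is biautomatic, all of this is effective: one searches for and verifies a biautomatic structure, computes the centralisers of the generators as rational subgroups, computes $Z(G)=\bigcap_i C(x_i)$ as a rational --- hence finitely generated --- subgroup and reads off its isomorphism type, and writes down a finite presentation of $Q:=G/Z(G)$ together with lifts to $G$ of the generators of $Q$ and the elements of $Z(G)$ by which the lifted relators of $Q$ fail to vanish. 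This last data represents the class $\alpha(G)\in H^{2}(Q;Z(G))$ of the central extension $1\to Z(G)\to G\to Q\to 1$. Since $Z(G)$ is characteristic in $G$, every isomorphism $G_{1}\to G_{2}$ restricts to an isomorphism $Z(G_{1})\to Z(G_{2})$ and descends to an isomorphism $Q_{1}\to Q_{2}$, and these are compatible with the two extension classes; conversely a compatible such pair reassembles into an isomorphism $G_{1}\to G_{2}$. Hence $G_{1}\iso G_{2}$ if and only if there are isomorphisms $\phi\colon Q_{1}\to Q_{2}$ and $\theta\colon Z(G_{1})\to Z(G_{2})$ with $\phi^{*}\alpha(G_{2})=\theta_{*}\alpha(G_{1})$.

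Given finite presentations $P_{1},P_{2}$ of groups $G_{1},G_{2}\in\G$, I would first run the extraction above to obtain the $Z(G_{i})$, finite presentations of $Q_{i}:=G_{i}/Z(G_{i})\in\Q$, and the classes $\alpha(G_{i})$. If $Z(G_{1})\not\iso Z(G_{2})$ --- a routine decision for finitely generated abelian groups --- report ``non-isomorphic''. Otherwise feed the two presentations of $Q_{1},Q_{2}$ to the hypothesised algorithm for $\Q$; if it answers ``non-isomorphic'', report the same, by the criterion just displayed. If it answers ``isomorphic'', then isomorphisms $Q_{1}\to Q_{2}$ and $Z(G_{1})\to Z(G_{2})$ both exist, so by enumerating candidate homomorphisms in both directions and verifying, with the solvable word problems, that two of them are mutually inverse, I will in finite time produce explicit such isomorphisms $\phi_{0},\theta_{0}$. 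Transporting $\alpha(G_{2})$ across $\phi_{0}$ and $\theta_{0}$ gives a class $\widetilde{\alpha}_{2}\in H^{2}(Q_{1};Z(G_{1}))$, and the whole problem now reduces to a single question: do $\alpha(G_{1})$ and $\widetilde{\alpha}_{2}$ lie in one orbit of the action of $\mathrm{Aut}(Z(G_{1}))\times\mathrm{Out}(Q_{1})$ on $H^{2}(Q_{1};Z(G_{1}))$?

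The ``yes'' half of this last question is straightforwardly semidecidable: $\mathrm{Aut}(Z(G_{1}))$ is an effectively presented group, $\mathrm{Aut}(Q_{1})$ is recursively enumerable (enumerate pairs of endomorphisms given by words in the generators and certify, via the word problem in $Q_{1}$, that each composite is the identity), and once one has constructed a finite model for the $3$-skeleton of $K(Q_{1},1)$ --- which exists and is constructible for biautomatic $Q_{1}$ --- the group $H^{2}(Q_{1};Z(G_{1}))$ and the induced actions are all explicit, so one simply searches for an element of the product group carrying $\alpha(G_{1})$ to $\widetilde{\alpha}_{2}$. The hard part will be the ``no'' half: to \emph{certify} that no such element exists one needs genuine control of the orbits of $\mathrm{Out}(Q_{1})$ --- an a priori badly behaved, possibly infinite group --- on the finitely generated abelian group $H^{2}(Q_{1};Z(G_{1}))$, and it is precisely here that the biautomaticity of $G_{1}$ and $G_{2}$ themselves, not merely of the quotients, has to be exploited. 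I expect the crux to be a finiteness statement roughly of the shape: among outer automorphism classes of $Q_{1}$, only finitely many --- and effectively listable ones --- can possibly carry $\alpha(G_{1})$ into an $\mathrm{Aut}(Z(G_{1}))$-orbit of $\widetilde{\alpha}_{2}$ while keeping the total extension biautomatic, which collapses the orbit question to a finite search and completes the reduction. Everything preceding that finiteness input is routine bookkeeping.
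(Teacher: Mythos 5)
Your first half is sound and is essentially the paper's own reduction (Sections \ref{s:1} and \ref{s:reduce}): from the two input presentations one effectively extracts $Z(G_i)$ and $Q_i=G_i/Z(G_i)$, rejects if either the centres or the quotients fail to be isomorphic (the latter using the hypothesis on $\Q$), and is otherwise left with comparing two central extensions of a fixed $Q$ by a fixed $Z$; your criterion that $G_1\cong G_2$ if and only if some pair $(\phi,\theta)$ carries one extension class to the other is correct, since the centre is characteristic. The genuine gap is exactly where you locate it. You recast what remains as deciding whether $\alpha(G_1)$ and $\widetilde{\alpha}_2$ lie in one orbit of $\operatorname{Aut}(Z)\times\operatorname{Out}(Q)$ on $H^2(Q;Z)$, note that the ``yes'' answer is semidecidable, and then defer the ``no'' answer to an unproved finiteness statement. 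That statement is not something you can expect to hold, let alone prove here: for a biautomatic $Q$ the group $\operatorname{Out}(Q)$ may be infinite and inaccessible, its orbits on $H^2(Q;Z)$ may be infinite, and the proviso ``while keeping the total extension biautomatic'' gives no leverage, since by the Neumann--Reeves and Mosher results all the relevant central extensions are biautomatic anyway. So the proposal does not close.

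The paper never decides this orbit problem. It inverts the logic: it algorithmically constructs a finite $3$-skeleton of a $K(Q,1)$ (Theorem \ref{t:real} --- the technical bulk of the paper, and an ingredient you invoke in passing without justification), computes $H^2(Q,Z)$ from the resulting cellular cochain complex, and writes down one explicit finite presentation of a central extension for each cohomology class (Proposition \ref{extenumerate}). This produces a recursive, complete, irredundant enumeration of the candidate groups, containing a copy of each of $G_1$ and $G_2$. Lemma \ref{iso} then finishes: isomorphism of finitely presented groups is semidecidable (Lemma \ref{partisoalg}), so one runs the positive search diagonally between $G_1$ and the list and between $G_2$ and the list; both searches halt, each group is located at a unique position, and one simply compares positions. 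No certificate of non-equivalence of cohomology classes, and no control of $\operatorname{Out}(Q)$, is ever required. (Your concern is not baseless: for the matching argument the enumeration must be irredundant as a list of \emph{groups}, not merely of extension classes, which is where the orbit structure silently reappears. But the recursion-theoretic mechanism --- complete irredundant enumeration plus semidecidability yields decidability --- is the idea your proposal is missing, and it is what replaces your hoped-for finiteness statement.)
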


Zlil Sela \cite{S}  proved that the isomorphism problem is
solvable among finite presentations of freely-indecomposable, torsion-free hyperbolic
groups, and  his work has recently been extended by 
Fran{\c{c}}ois Dahmani and Vincent Guirardel to cover all hyperbolic
groups \cite{DGuir}; see also  \cite{DGroves}.
Walter Neumann and Lawrence Reeves \cite{NR} 
proved that central extensions of hyperbolic groups are
biautomatic. Thus we have:
\begin{corollary}
The isomorphism problem is solvable in the class of central extensions of
hyperbolic groups.
\end{corollary}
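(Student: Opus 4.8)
The plan is to deduce this from Theorem A together with the theorems of Neumann--Reeves and of Dahmani--Guirardel quoted above; the only step with any real content is a structural remark about centres. Write $\G$ for the class of central extensions of hyperbolic groups. By the theorem of Neumann--Reeves \cite{NR} the groups in $\G$ are biautomatic, so Theorem A applies and reduces the task to solving the isomorphism problem in the class $\Q=\{G/Z(G)\mid G\in\G\}$. I would then show that every quotient $G/Z(G)$ with $G\in\G$ is \emph{again} word-hyperbolic. Granting that, any pair of finite presentations of groups in $\Q$ is, in particular, a pair of presentations of hyperbolic groups, so the Dahmani--Guirardel algorithm \cite{DGuir} decides whether the two groups are isomorphic; hence the isomorphism problem is solvable in $\Q$, and Theorem A yields the corollary.

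To prove that $G/Z(G)$ is hyperbolic, fix a central extension $1\to A\to G\xrightarrow{\pi}Q\to 1$ realising $G\in\G$, with $Q$ hyperbolic and $A\le Z(G)$. Since $\pi$ is onto, $\pi(Z(G))\le Z(Q)$; and since $A\le Z(G)=\pi^{-1}(\pi(Z(G)))$, the map $\pi$ induces a natural isomorphism $G/Z(G)\iso Q/\pi(Z(G))$. I would then invoke the standard fact that the centre of a hyperbolic group is virtually cyclic, and is finite unless the group itself is elementary (finite or virtually $\Z$): an infinite-order central element would force the whole group to coincide with its own, necessarily virtually cyclic, centraliser, while a torsion subgroup of a hyperbolic group is finite. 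Two cases then close the argument. If $Q$ is non-elementary, $Z(Q)$ is finite, hence so is $\pi(Z(G))$, and $Q/\pi(Z(G))$ is a quotient of a hyperbolic group by a finite normal subgroup, so it is hyperbolic. If $Q$ is elementary, then $Q/\pi(Z(G))$ is a quotient of a finite or virtually cyclic group, hence itself finite or virtually cyclic, so again hyperbolic. Either way $G/Z(G)$ is hyperbolic, as required.

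The main (and essentially only) subtlety is in this last step: one must resist assuming $Z(G)=A$, so that $G/Z(G)$ need not literally be the hyperbolic group $Q$ we started with. The point is that $\pi(Z(G))$ is nonetheless central in $Q$, hence virtually cyclic, and that is exactly what is needed to keep $G/Z(G)$ inside the class of hyperbolic groups and so within reach of the Dahmani--Guirardel algorithm; everything else is a formal application of Theorem A and of the cited theorems.
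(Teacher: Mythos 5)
Your proposal is correct and follows exactly the route the paper intends: the corollary is stated there as an immediate consequence of Theorem A, Neumann--Reeves, and Sela/Dahmani--Guirardel, with no further proof given. The one point you develop in detail --- that $G/Z(G)$ is again hyperbolic because $\pi(Z(G))$ is central in $Q$ and hence finite (or $Q$ is elementary), rather than being literally the group $Q$ one started with --- is left implicit in the paper, and your verification of it is accurate and worth making explicit.
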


In Section \ref{s:makeKG1} we describe an algorithm for constructing the
skeleta of classifying spaces for combable groups (given an explicit fellow-traveller
constant). From this we deduce:

\begin{thmB}\label{thmB}
There is an algorithm that, given a positive integer $d$ and any finite presentation of an
automatic group $\Gamma$, will construct  a compact $(d+1)$-skeleton for $K(\Gamma,1)$,
i.e.~an explicit finite, connected cell complex $K$ with $\pi_1K\cong\Gamma$ and $\pi_iK=0$
for $2\le i \le d$.
\end{thmB}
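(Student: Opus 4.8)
The plan is to deduce Theorem~\refthmB\ from the algorithm of Section~\ref{s:makeKG1}, which, given a combing of a group together with an explicit fellow-traveller constant, constructs finite skeleta of a classifying space. The real work is then to extract from a bare finite presentation $P$ of an automatic group $\Gamma$ an explicit combing $\sigma\colon\Gamma\to A^{*}$ over a finite generating set $A$, together with an integer $k$ witnessing the fellow-traveller property; this is exactly where automaticity, as opposed to mere combability, is used, since for a general combable group the combing need not even be computable.

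First I would run the standard search procedure for automatic structures (the word-difference method of Epstein et al.): it enumerates candidate word-acceptor and multiplier automata over the generators of $P$, verifies the axioms for an automatic structure and that the relators of $P$ hold, and halts with a valid structure $(W,\{M_{a}\})$ for $\Gamma$; since $\Gamma$ is assumed automatic, this search terminates. From such a structure one obtains a section $\sigma$ by passing, if necessary, to a regular cross-section of $W$, so that $\sigma_{g}$ is computable from $g$ by running the automaton; and because the word differences $\sigma_{g}(t)^{-1}\sigma_{ga}(t)$ that can occur are indexed by the states of the multipliers $M_{a}$, one reads off an explicit $k$ with $d\bigl(\sigma_{g}(t),\sigma_{ga}(t)\bigr)\le k$ for all $g\in\Gamma$, all $a\in A^{\pm}$ and all $t$. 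Feeding $(P,\sigma,k,d)$ to the algorithm of Section~\ref{s:makeKG1} then outputs the desired compact $(d+1)$-skeleton.

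It remains to indicate the substance, the Section~\ref{s:makeKG1} construction itself, and to locate its main difficulty. One builds $K$ by induction on skeleta. Start from the presentation $2$-complex $X_{2}$ of $P$, and suppose inductively that, for some $n$ with $2\le n\le d$, one has built a finite complex $X_{n}$ with $\pi_{1}X_{n}\iso\Gamma$ and $\pi_{i}X_{n}=0$ for $2\le i\le n-1$. The universal cover $\widetilde{X_{n}}$ is then $(n-1)$-connected, so by the Hurewicz theorem $\pi_{n}X_{n}\iso H_{n}(\widetilde{X_{n}})$ as $\Z\Gamma$-modules; attaching finitely many $(n+1)$-cells to $X_{n}$ along maps $S^{n}\to X_{n}$ whose classes generate $\pi_{n}X_{n}$ yields a finite $X_{n+1}$ with $\pi_{i}X_{n+1}=0$ for $2\le i\le n$, and $K=X_{d+1}$ completes the induction. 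What makes this effective is the combing: its $k$-fellow-traveller property supplies, for each edge of the Cayley graph, a combing homotopy between $\sigma_{g}$ and $\sigma_{ga}$ of width at most $k$, hence fillings of edge-loops of area bounded in terms of their length; iterating combing homotopies across the cells already built yields, dimension by dimension, fillings of $n$-spheres whose size is bounded by a quantity the algorithm can compute from $n$, $k$ and the data already in hand. Such effective isoperimetric bounds both show that $H_{n}(\widetilde{X_{n}})$ is finitely generated over $\Z\Gamma$ and reduce the search for a generating set of explicit cycles --- and hence for the cells to attach --- to a finite search.

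The hard part is precisely this passage from the single one-dimensional datum $k$ to \emph{effective} isoperimetric inequalities in every dimension, with constants computable from $k$, and with enough uniformity over the complexes produced at earlier stages of the induction that at each step one knows in advance how far the search for generating cycles must range. The technical core of Section~\ref{s:makeKG1} is to establish these higher fillings --- an effective form of the statement that a combable group is of type $F_\infty$ --- and to organise the resulting bounded searches so that the procedure provably halts with a genuine finite cell complex; by comparison, the reduction from automatic groups to this situation carried out above is routine.
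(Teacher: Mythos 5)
Your reduction of Theorem \refthmB\ to the classifying-space construction is exactly the paper's: run the Epstein--et~al.\ search to find an automatic structure, read off an explicit fellow-traveller constant from the multiplier automata, use the resulting solution to the word problem to list the short relators, and feed this data to the construction of Section \ref{s:makeKG1}. That part is correct and matches the paper (cf.\ Proposition \ref{find}, Lemma \ref{list} and the Corollary to Theorem \ref{t:real}). Note one small point of hygiene: the construction in the paper does not consume the combing itself, only the constant $k$ (in the weaker form of ``$k$-Lipschitz contractions'') together with the finite list of words of length at most $2(k+1)$ that equal $1$; this is why the non-computability of combings for general combable groups is not the obstruction.

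Where your account diverges --- and where the gap lies --- is in the sketch of the construction itself. You propose the classical induction: build $X_{n+1}$ from $X_n$ by attaching $(n+1)$-cells along generators of $\pi_n X_n\iso H_n(\widetilde{X_n})$, with the combing supplying effective isoperimetric bounds that make the search for generating cycles finite. You correctly identify that the passage from the one-dimensional constant $k$ to effective higher fillings is the crux, but you do not supply it, and as stated the induction stalls: the combing contraction of an $n$-sphere $S$ is a map of $S\times[0,N]$ whose horizontal cells have $1$-skeleta stretched by a factor of $k$ relative to the cells of $S$, so the contraction does not land in $X_n$ unless $X_n$ already contains cells filling these larger combinatorial types. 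One cannot ``iterate combing homotopies across the cells already built'' without having built them. The paper resolves this not by computing $\pi_n$ or searching for generating cycles at all, but by attaching, in each dimension $n\le d+1$, one cell for \emph{every} equivalence class of ``sensible labellings'' of the cube $\D^n$ by words of length up to $k^d$ (plus degenerate and translation cells). Proposition \ref{p:contract} then contracts any finite subcomplex of $\tilde K_v^{(n)}$ inside $\tilde K_I^{(n+1)}$ explicitly, via the labelled product $S\times[-1,N]$, because every labelled cube that arises --- of magnitude at most $k^{n+1}$ --- is by fiat the boundary pattern of a cell already present; the Addendum makes the contraction algorithmic by the same repetition-deletion bound as in Proposition \ref{p:fp}. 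This is why the construction depends on $d$ from the outset (Remark \ref{r:need-d}), a dependence your inductive scheme does not account for. So your proposal is a correct reduction resting on a genuinely different, and incomplete, account of the underlying construction: the missing content is precisely the paper's Sections 3.2--3.5.
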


\begin{thmBb}\label{thmBb}
There is an algorithm that, given any finite presentation of an
automatic group $\Gamma$, will calculate $H_*(\Gamma,A)$ and
$H^*(\Gamma,A)$, where $A$ is any finitely 
generated abelian group and the action of $\Gamma$ on $A$
is trivial.
\end{thmBb}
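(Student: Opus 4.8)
The plan is to reduce the computation of each $H_n(\Gamma,A)$ and $H^n(\Gamma,A)$ to elementary linear algebra over $\Z$, by feeding a suitable compact skeleton produced by Theorem~\refthmB{} into the classical algorithm for the (co)homology of a finite chain complex. (Since $\Gamma$ may have non-zero (co)homology in infinitely many degrees --- nontrivial finite groups are automatic --- the integer $n$, along with $A$, must be regarded as part of the input.) So fix $n\ge 0$ and run the algorithm of Theorem~\refthmB{} with $d=n$ to obtain an explicit finite, connected cell complex $K$ with $\pi_1K\cong\Gamma$ and $\pi_iK=0$ for $2\le i\le n$. Lifting the CW structure to the universal cover $\widetilde{K}$, one sees that $\widetilde{K}$ is $n$-connected, so $H_i(\widetilde{K};\Z)=0$ for $1\le i\le n$ by Hurewicz. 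Hence the cellular chain complex $C_*(\widetilde{K})$, regarded as a complex of finitely generated free $\Z\Gamma$-modules (one basis element per cell of $K$), is exact in degrees $1,\dots,n$ and has $H_0=\Z$; it therefore agrees through degree $n+1$ with some free resolution of the trivial $\Z\Gamma$-module $\Z$.

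Because $\Gamma$ acts trivially on $A$, applying $-\otimes_{\Z\Gamma}A$ (respectively $\mathrm{Hom}_{\Z\Gamma}(-,A)$) to $C_*(\widetilde{K})$ returns precisely the ordinary cellular chain (respectively cochain) complex of $K$ with coefficients in $A$. Since the $n$-th (co)homology of a complex depends only on its terms in degrees $n-1,n,n+1$, the partial resolution above already suffices to compute $\mathrm{Tor}^{\Z\Gamma}_n(\Z,A)$ and $\mathrm{Ext}^n_{\Z\Gamma}(\Z,A)$, yielding canonical isomorphisms $H_n(\Gamma,A)\cong H_n(K;A)$ and $H^n(\Gamma,A)\cong H^n(K;A)$. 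It then remains to compute $H_n(K;A)$ and $H^n(K;A)$, and here everything is effective: the boundary operators of $C_*(K)$ are explicit integer matrices read off from the finite complex $K$; writing $A\cong\Z^r\oplus\bigoplus_j\Z/m_j$ and using that (co)homology commutes with finite direct sums reduces matters to the coefficients $\Z$ and $\Z/m$; and for these the homology of $C_*(K)\otimes A$ and the cohomology of $\mathrm{Hom}(C_*(K),A)$ are computed, and presented as explicit finitely generated abelian groups, by reduction to Smith normal form.

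There is no genuine obstacle internal to this argument: all of the difficulty is concentrated in Theorem~\refthmB{}, which produces the skeleton $K$ algorithmically from the given presentation. Within the present proof the only points needing care are the degree bookkeeping --- one must take $d=n$, so that the $(n+1)$-cells of $K$, which influence $H_n$ and $H^n$, are built and correct --- and the observation that replacing a partial free $\Z\Gamma$-resolution by the $A$-(co)chain complex of $K$ is legitimate precisely because the action of $\Gamma$ on $A$ is trivial.
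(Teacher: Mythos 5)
Your proposal is correct and follows essentially the same route as the paper: invoke Theorem \refthmB{} (with $d$ at least the degree in question) to produce a compact skeleton $K$ of a $K(\Gamma,1)$, identify $H_n(\Gamma,A)$ and $H^n(\Gamma,A)$ with the cellular (co)homology of $K$ with coefficients in $A$, and compute the latter by Smith normal form. The paper compresses this into one sentence; your justification via Hurewicz and the partial free $\Z\Gamma$-resolution, and your remark that the degree must be treated as part of the input, are just the details the paper leaves implicit.
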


The algorithm for calculating $H^2\Gamma$ furnishes the following
major ingredient for  the proof of Theorem A.

\begin{corollary}
There is an algorithm that computes
a complete irredundant list of central
extensions of any given automatic group 
by a given finitely generated abelian
group.
\end{corollary}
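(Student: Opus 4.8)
The plan is to combine the classification of central extensions by second cohomology with an explicit, algorithmic version of that classification: the goal is to produce not merely the abelian group $H^2(\Gamma,A)$ but finite presentations of the extension groups it parametrises.

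First I would run the algorithm of Section~\ref{s:makeKG1} --- i.e.\ Theorem~\ref{thmB} with $d=2$ --- to build a finite $3$-complex $K$ with $\pi_1K\iso\Gamma$ and $\pi_2K=0$. Its $2$-skeleton supplies an explicit finite presentation $\Gamma=\langle X\mid R\rangle$, and I would record the equivariant cellular chain complex of the universal cover,
\[
C_3(\Kt)\xrightarrow{\partial_3}C_2(\Kt)\xrightarrow{\partial_2}C_1(\Kt)\xrightarrow{\partial_1}C_0(\Kt)\longrightarrow\Z\longrightarrow 0 ,
\]
a finite partial free $\Z\Gamma$-resolution of $\Z$ which is exact at $C_0,C_1,C_2$ because $\Kt$ is simply connected with $H_2\Kt=\pi_2K=0$. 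In particular $\operatorname{im}\partial_3=\ker\partial_2$, so the $3$-cells furnish an explicit finite generating set for the identity sequences among the relators $R$.

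Next I would apply $\mathrm{Hom}_{\Z\Gamma}(-,A)$ (trivial action), obtaining a cochain complex of finitely generated abelian groups whose differentials are the integer matrices got by augmenting $\partial_2,\partial_3$; its degree-$2$ cohomology is $H^2(\Gamma,A)$, and computing it is a finite computation over $\Z$ (Smith normal form, keeping track of the relations of $A$). This is just the computation behind Theorem~\ref{thmBb} in degree $2$, now retaining an explicit representing cocycle for each class --- i.e.\ a function $c\colon R\to A$ with $c\circ\partial_3=0$. To such a $c$ I associate
\[
E_c:=\big\langle\, X\sqcup Y \;\bigm|\; S_0,\ [x,y]\ (x\in X,\ y\in Y),\ r\cdot c(r)^{-1}\ (r\in R)\,\big\rangle ,
\]
where $\langle Y\mid S_0\rangle$ is a fixed finite presentation of $A$ and $c(r)\in A$ is written as a word in $Y$.

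Finally I must check that $c\mapsto E_c$ induces a bijection between $H^2(\Gamma,A)$ and equivalence classes of central extensions of $\Gamma$ by $A$. That $\langle Y\rangle$ is central in $E_c$ with quotient $\langle X\mid R\rangle=\Gamma$ is immediate; that a cohomologous cocycle gives an equivalent extension (via $x\mapsto x\cdot b(x)^{-1}$ when $c-c'=\delta b$) is a one-line check; and the identification of $[c]\in H^2(\Gamma,A)$ with the class of $E_c$, hence surjectivity onto all extension classes, is the standard comparison of group cohomology with the cohomology computed from the resolution $C_*(\Kt)$. The one genuinely delicate point --- and the reason the $3$-skeleton, rather than Theorem~\ref{thmBb} alone, is needed --- is that $\langle Y\rangle$ is \emph{all} of $A$ and not a proper quotient: the only relations forced on the generators $Y$ beyond $S_0$ are the images under $r\mapsto c(r)$ of the identity sequences among $R$, and since $A$ is abelian these images are obtained by pairing $c$ with $\partial_3$ and so vanish precisely because $c$ is a cocycle. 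Running over one cocycle per cohomology class then outputs a complete irredundant list of central extensions, indexed by the explicitly computed group $H^2(\Gamma,A)$ (and literally finite exactly when that group is).
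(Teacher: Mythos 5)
Your proposal is correct and follows essentially the same route as the paper: build the $3$-skeleton of a $K(\Gamma,1)$ via Theorem B with $d=2$, compute explicit representative $2$-cocycles $c$ from the resulting finite partial free resolution, and write down for each class the presentation $\langle X\sqcup Y\mid S_0,\ [x,y],\ r=c(r)\rangle$, which is exactly the presentation appearing in Proposition 4.1 of the paper. Your added justification that $\langle Y\rangle$ realises all of $A$ (via the cocycle condition against $\partial_3$) is a correct elaboration of a point the paper leaves implicit.
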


Lee Mosher proved that central quotients of biautomatic groups are
biautomatic \cite{M}. By combining this with Theorems A and C, and
some well-known facts about subgroups of biautomatic groups, we
shall prove:

\begin{thmC}
The isomorphism problem among biautomatic groups is
solvable if and only if the isomorphism problem is solvable
among biautomatic groups with finite centre.
\end{thmC}

It follows from Theorems C and D that if the isomorphism problem
for biautomatic groups is unsolvable, then there must exist a
recursive sequence of  finite presentations such that each
of the groups presented is biautomatic, all of the groups in
the sequence have finite centre and isomorphic integral homology
and cohomology groups, but one cannot tell which of the
groups presented are isomorphic.

\section{Determining the centre of a biautomatic group}\label{s:1}

The main purpose of this section is to prove the following:

\begin{prop}\label{centre} There exists an algorithm that takes as input
an arbitrary finite presentation of a biautomatic group and
which gives as output the isomorphism type of the centre of the
group, and a finite set of words that generate the centre.
\end{prop}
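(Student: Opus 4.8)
The plan is to exploit the fact that in a biautomatic group $G$ with biautomatic structure $L$, the centralizer of any element is quasiconvex, and more importantly that one can effectively compute biautomatic structures on centralizers. First I would run the (known) algorithm that, given a finite presentation, verifies a proposed biautomatic structure — more precisely, I would enumerate candidate pairs $(A, L)$ consisting of a finite-state automaton over a generating set together with the multiplier automata, halting when a valid biautomatic structure is confirmed (this is the standard "automatic groups are recursively enumerable together with their structures" fact, extended to the biautomatic case). Once I have an explicit biautomatic structure, I would invoke the effective version of the theorem of Gersten–Short (and its refinements) that the centralizer $C_G(g)$ of a generator $g$ is itself biautomatic with a structure computable from $L$ and $g$; iterating over the finite generating set $\{g_1,\dots,g_n\}$, the centre is $Z(G) = \bigcap_i C_G(g_i)$, and I would need an effective procedure for intersecting finitely many biautomatic (in particular, rational) subgroups. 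Here I would use that each $C_G(g_i)$ is given by a rational set of normal-form representatives, so the intersection is again a rational (in fact biautomatic) subgroup with computable structure, and from the resulting automaton one reads off a finite generating set for $Z(G)$.

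The second task is to identify the \emph{isomorphism type} of $Z(G)$ as an abstract group. Since $Z(G)$ is a finitely generated abelian group, it suffices to compute its rank and torsion coefficients. Having a finite generating set $\{z_1,\dots,z_k\}$ for $Z(G)$ together with the biautomatic structure on $G$, I would solve the word problem in $G$ (biautomatic groups have solvable word problem, with an explicit algorithm from the structure) to detect all relations among the $z_j$: the relation module of an abelian group on $k$ generators is detected by finitely much computation once one can decide, for each word $w$ in the $z_j$, whether $w =_G 1$. More carefully, I would build the abelianized presentation by enumerating words $w$ in $z_1^{\pm1},\dots,z_k^{\pm1}$, testing $w =_G 1$, and collecting the resulting integer vectors; this generates a subgroup of $\Z^k$ whose Smith normal form gives the torsion coefficients and free rank of $Z(G) = \Z^k / (\text{relations})$. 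The subtlety is knowing \emph{when to stop} enumerating relations — but here I can use that $Z(G)$ has a well-defined torsion-free rank $r$ detectable by finding $r$ elements that are independent in the abelianization (again checkable via the word problem) together with the fact that, once a candidate finite list of relations has Smith normal form of the right rank, any further relation is already a consequence; combined with semi-decidability of "$w$ is a consequence of the listed relations" in the free abelian setting, the two enumerations converge.

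The main obstacle I anticipate is the effectivity of passing from a biautomatic structure on $G$ to biautomatic structures on the centralizers $C_G(g_i)$ and then to their intersection — i.e.\ making the Gersten–Short centralizer theorem and the closure of biautomatic subgroups under intersection genuinely algorithmic, with the automata produced as explicit output rather than merely shown to exist. One must check that every step (the rational-subset constructions, the passage to normal forms, the intersection of rational languages relative to the normal-form language) is uniformly computable from the input presentation and the verified structure. A secondary, more routine obstacle is bounding the search in the relation-enumeration step so that the isomorphism type of the abelian group $Z(G)$ is actually output after finite time; I expect this to follow from pairing the two semi-decision procedures (enumerate relations; enumerate proofs that a word is a consequence of finitely many relations) so that together they decide membership, hence eventually stabilize the Smith normal form.
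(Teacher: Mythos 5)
Your first stage follows the paper's route in substance: compute a biautomatic structure together with an explicit fellow-traveller constant $k$ by a diagonal search over candidate automata (Proposition \ref{find}), realise $Z(\Gamma)=\bigcap_a C(\mu(a))$, and cut out each $\L\cap\mu^{-1}(C(\mu(a)))$ as a regular language. The ``main obstacle'' you flag --- making Gersten--Short effective --- is exactly what the paper resolves, and the resolution is elementary once $k$ is in hand: a word $w$ with $\mu(wa)=\mu(aw)$ has all its prefixes $p$ satisfying $d(1,\mu(p^{-1}ap))\le k$, so one builds an explicit automaton whose states are the ball $B(k,1)$ plus a fail state, reading $b$ by $g\mapsto \mu(b)^{-1}g\mu(b)$. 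A byproduct is that $Z(\Gamma)$ is generated by words of length at most $2k+1$, so the generating set comes for free; you do not need any general machinery for intersecting rational subgroups beyond intersecting regular languages.

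The genuine gap is in your second stage, the determination of the isomorphism type. Enumerating relations among $z_1,\dots,z_k$ via the word problem produces an increasing chain of subgroups $R_1\le R_2\le\cdots$ of $\mathbb{Z}^k$ whose union is the true relation subgroup $R$, but neither of your proposed stopping criteria certifies $R_n=R$. Knowing that $\mathbb{Z}^k/R_n$ has the correct torsion-free rank says nothing about torsion (e.g.\ if $Z(\Gamma)\cong\mathbb{Z}$ with $z_1=z_2$ and you have so far only found $z_1^2z_2^{-2}=1$, the Smith normal form gives $\mathbb{Z}\oplus\mathbb{Z}/2$, of the right rank but wrong group); and ``every further relation is a consequence of the listed ones'' is a universally quantified statement that is refutable but not verifiable, so semi-decidability of consequence does not let the two enumerations converge. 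Likewise, ``$r$ elements are independent'' is not checkable via the word problem in finite time. What is missing is an a priori bound on the relations one must collect. The paper supplies it: $Z(\Gamma)$ is quasiconvex with a quasiconvexity constant $K$ computable from $k$, and this yields an explicit length bound such that checking all concatenations of the generators up to that length already gives a correct finite presentation of $Z(\Gamma)$. (Given a correct finite presentation, your Smith-normal-form computation would then work; the paper instead searches a fixed irredundant list of presentations of f.g.\ abelian groups for an isomorphism, via the partial algorithm of Lemma \ref{partisoalg}, which is guaranteed to terminate.) Without such a bound, your procedure never provably halts with the right answer.
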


We shall assume that the reader is familiar with the basic
vocabulary of automatic group theory, as set out in the seminal
text \cite{E+}. It is convenient  to fix the following notation.

Let $\Gamma$ be a group with finite generating set $\A$. The free
monoid on $\A$ is denoted $\A^*$, and the natural surjection
$\A^*\to \Gamma$ is denoted $\mu$. We assume that $\A$ is equipped
with an involution, written $a\mapsto a^{-1}$ such that
$\mu(a^{-1}) = \mu(a)^{-1}$.

Given a language $\L\subseteq\A^*$, we define the language
$\L^{-1}\subseteq\A^*$ to be the set of formal inverses of $\L$,
that is, $a_1\cdots a_n\in\L^{-1}$ if and only if $a_n^{-1}\cdots
a_1^{-1}\in\L$. A language $\L\subset\A^*$ is called a
\emf{biautomatic structure} for $\Gamma$ if the restriction of
$\mu : \A^*\twoheadrightarrow\Gamma$  to each of $\L$ and
$\L^{-1}$ is an automatic structure --- see \cite[Definition
2.5.4]{E+}.

We remind the reader that associated to a  biautomatic structure
$\L\twoheadrightarrow\Gamma$ one has a \emf{fellow-traveller}
constant $k>0$ with the property  that for letters $a,a'\in\A^{\pm
1}$, words $w,w'\in\L$ with $\mu(aw)=\mu(w'a')$ and positive
integers $t$, one has $d(\mu(aw_t),\mu(w'_t))\le k$, where $d$ is
the word metric associated to $\A^*\twoheadrightarrow\Gamma$ and
$u_t$ denotes the prefix of length $t$ in the word $u$.

In \cite[Chapter 5]{E+} an algorithm is described which, given a
finite presentation for an automatic group, will produce an
automatic structure for the group. Although not explicitly stated
in \cite{E+}, the following generalization is
a straightforward consequence of their
proof.

\begin{prop}\label{find}
There exists an algorithm that, given a finite presentation of a
biautomatic group, will construct a biautomatic structure for the
group and calculate a fellow-traveller constant for that
structure.
\end{prop}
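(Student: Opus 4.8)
The plan is to run the algorithm of \cite[Chapter 5]{E+} essentially unchanged, exploiting the fact that it has the shape ``enumerate candidate automata, then verify''. That algorithm enumerates candidate tuples $(W,\{M_a\}_{a\in\A\cup\{\e\}})$ consisting of a word acceptor $W$ over $\A$ together with multiplier automata $M_a$, and for each candidate it runs a procedure, drawing on the given presentation $P$, that halts successfully exactly when $(W,\{M_a\})$ is an automatic structure for $|P|$; since an automatic group carries an automatic structure, some candidate is eventually certified. To obtain a \emph{biautomatic} structure I would instead search over tuples $\tau=(W,\{M_a\},\{M'_a\})$ and, for each, run two copies of the \cite{E+} verification in parallel: one checking that $(W,\{M_a\})$ is an automatic structure for $|P|$, the other that $(W^{-},\{M'_a\})$ is an automatic structure for $|P|$, where $W^{-}$ is the automaton accepting $L(W)^{-1}$. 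The latter is effectively computable from $W$, since regular languages are effectively closed under reversal and under the letterwise involution $a\mapsto a^{-1}$. Dovetailing over all $\tau$, the first tuple both of whose verifications succeed gives, by Definition~2.5.4 of \cite{E+}, a biautomatic structure $\L:=L(W)$ for $|P|$, together with all the automata needed to describe it.

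Termination is immediate from the hypothesis: if $|P|$ is biautomatic then some regular language $\L_0$ is a biautomatic structure, and the word acceptor of $\L_0$ together with the finitely many multiplier automata witnessing that $\L_0$ and $\L_0^{-1}$ are automatic structures form a tuple $\tau_0$ that the dovetailed search reaches and certifies after finitely many steps. (One really must continue the search rather than accept the first structure the unmodified \cite{E+} algorithm produces: a biautomatic group is automatic, so that algorithm does halt, but it may return an automatic structure that is not biautomatic.) Once a certified $\tau$ is in hand, a fellow-traveller constant in the sense of the definition preceding the Proposition is read off from the automata: the multiplier automata $\{M_a\}$ involve only finitely many word differences, whose $\A$-lengths admit a bound computable from the automata, and the standard argument of Section~2.3 of \cite{E+} converts such a bound into synchronous fellow-traveller constants governing right multiplication by a generator and the $\e$-multiplier; the automata $\{M'_a\}$ likewise yield a constant governing left multiplication by a generator. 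Given $a,a'\in\A^{\pm1}$ and $w,w'\in\L$ with $\mu(aw)=\mu(w'a')$, one passes from $aw$ to $w'a'$ by a bounded number of such moves (for instance via a normal form of the common group element), so composing these constants produces an explicit $k$ with $d(\mu(aw_t),\mu(w'_t))\le k$ for all $t$.

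There is no serious obstacle here, which is why, as remarked above, the Proposition is a straightforward consequence of the proof in \cite{E+}. Only two points need attention: that their procedure genuinely separates into an enumeration stage and a verification stage, so that the extra demand ``$\L^{-1}$ is also automatic'' may be imposed without affecting termination --- this is clear from their argument; and that the constant produced in the last step really controls the two-sided fellow-traveller property, with letters inserted on both the left and the right, rather than merely the one-sided fellow-traveller property of an automatic structure. The latter is the only mildly delicate bookkeeping, and it is routine given the biautomaticity of $\L$.
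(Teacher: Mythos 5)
Your proposal is correct and follows essentially the same route as the paper: dovetail the Chapter 5 machinery of \cite{E+} over candidate word acceptors $W$ paired with the effectively computable automaton for $L(W)^{-1}$, certify the first candidate for which both verifications succeed (termination being guaranteed by biautomaticity), and then read off a fellow-traveller constant from the geometry of the resulting automata as in Lemma 2.3.2 of \cite{E+}. The only cosmetic difference is that you enumerate full tuples including guessed multiplier automata and verify them, whereas the paper enumerates only the word acceptors and lets the \cite{E+} procedure construct the multipliers; both variants are sound.
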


\begin{proof}
We describe the changes needed to the algorithm
in section 5.2 of
\cite{E+}.

Given a finite state automaton $W$ with accepted language $\L$
(over the given generating set), the algorithm given in \cite{E+}
will, if $\L$ is part of an automatic structure, eventually
terminate and give a full description of the automatic structure
(i.e., all multiplier automata and the equality checker). Let
$W^{-1}$ be a finite state automaton with accepted language
$\L^{-1}$. Applying this algorithm to both $W$ and $W^{-1}$ gives
an algorithm which will terminate if $\L$ is a biautomatic
structure.

Using a standard diagonal argument, this procedure is applied
`simultaneously' to all pairs of finite state automata
$(W,W^{-1})$ with the given input alphabet. The fact that the
group is biautomatic ensures that at some point the algorithm
terminates. One can then obtain a fellow-traveller constant
directly from the geometry of the automata (cf. Lemma 2.3.2
\cite{E+}).
\end{proof}

\begin{lem}\label{list}
There is an algorithm that, given a finite presentation of an
automatic group, will list all of the words that represent the
identity in order of increasing length.
\end{lem}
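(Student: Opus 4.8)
The plan is to reduce the problem to an effective solution of the word problem in $\Gamma$, which is available because $\Gamma$ is assumed automatic. First I would feed the given finite presentation to the algorithm of \cite[Chapter 5]{E+}; since $\Gamma$ is automatic, that algorithm terminates and returns an automatic structure for $\Gamma$, i.e.\ a word acceptor together with all of the multiplier automata (and the equality checker). As explained in \cite[Chapter 2]{E+}, from such a structure one obtains an explicit (indeed quadratic-time) solution to the word problem: for any $w\in\A^*$ one can compute an accepted word representing $\mu(w)$ by starting from an accepted word for the identity and successively applying the multiplier automata corresponding to the letters of $w$; in particular one can decide whether $\mu(w)=1$.

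Once the word problem is solved the lemma is immediate. Enumerate the elements of $\A^*$ in order of increasing length, breaking ties by a fixed shortlex order: for each $n=0,1,2,\dots$ run through the finitely many words of length $n$, use the word-problem algorithm to decide for each such $w$ whether $\mu(w)=1$, and output $w$ precisely when this holds. This produces exactly the list of words representing the identity, in order of non-decreasing length, as required.

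There is no serious obstacle here; the only point worth emphasising is that one genuinely needs the automatic structure, and not merely the presentation, to carry this out. From a finite presentation alone the set of words representing the identity is recursively enumerable (enumerate products of conjugates of the defining relators), but such an enumeration does not respect the length ordering, and without a decision procedure for the word problem one cannot sieve the words of a given length in finite time. It is precisely the automaticity hypothesis --- via the terminating construction of \cite[Chapter 5]{E+} --- that supplies the missing decision procedure and hence makes the length-ordered listing effective.
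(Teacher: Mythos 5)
Your proposal is correct and follows essentially the same route as the paper: compute an automatic structure from the presentation (terminating because $\Gamma$ is automatic), use it to solve the word problem, and then sieve a length-ordered enumeration of $\A^*$. The only difference is cosmetic --- you spell out the reduction-to-normal-form step, while the paper simply invokes the equality checker against the empty word.
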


\begin{proof}
As above, we first calculate an automatic structure for the group.
Let $\A$ denote the generating set of the presentation. Given an
enumeration of all words over $\A$ in order of increasing length, we can
use the equality checker to test whether each is equal in the
group to the identity (which is represented by the empty word).
\end{proof}

\begin{lem}
There exists an algorithm that, given a biautomatic structure
($\L\twoheadrightarrow\Gamma$) for a group $\Gamma$, will
calculate the sublanguage of $\L$ that evaluates onto the centre
of $\Gamma$ (i.e., the language $\L\cap\mu^{-1}(Z(\Gamma))$).
\end{lem}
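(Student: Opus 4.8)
The plan is to exploit the elementary fact that an element of $\Gamma$ is central if and only if it commutes with each of the finitely many generators in $\A$, and to show that, for a \emph{fixed} generator $a$, the set of $\L$-words whose $\mu$-image commutes with $\mu(a)$ is an effectively constructible regular language. Intersecting these finitely many languages with each other and with $\L$ (i.e.\ with the language of the word acceptor) then yields $\L\cap\mu^{-1}(Z(\Gamma))$ as a regular language, presented by an explicit automaton.

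First I would record what comes for free: a biautomatic structure is in particular given together with the word acceptor for $\L$, the equality recognizer, and the multiplier automata, so the word problem for $\Gamma$ is solvable and, by testing a given word for equality against all words up to a prescribed length, we can decide membership in any ball $B(n)=\{g\in\Gamma:|g|\le n\}$ and compute a canonical (say, shortlex-least) representative word for each element of such a ball. We may also assume a fellow-traveller constant $k>0$ for the structure is in hand, since one can be read off from the geometry of the multiplier automata (cf.\ Lemma 2.3.2 of \cite{E+}), exactly as in the proof of Proposition~\ref{find}.

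Now fix $a\in\A$. For $w\in\L$ the condition ``$\mu(w)$ commutes with $\mu(a)$'' is ``$\mu(aw)=\mu(wa)$'', equivalently ``$\mu(w)^{-1}\mu(a)\mu(w)=\mu(a)$''. I would build a deterministic automaton $D_a$ over $\A$ that reads $w=w_1\cdots w_n$ and maintains, after reading the prefix $w_1\cdots w_t$, the group element $c_t:=\mu(w_1\cdots w_t)^{-1}\,\mu(a)\,\mu(w_1\cdots w_t)$, together with an absorbing dead state. The update rule $c_t=\mu(w_t)^{-1}c_{t-1}\mu(w_t)$ involves only left and right multiplication by a single generator, so it can be carried out on canonical representative words using the solvability of the word problem; the state set is taken to be $B(k+1)$, represented by these canonical words, and the transition goes to the dead state whenever the updated element would leave $B(k+1)$. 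Declare the single state $\mu(a)$ accepting. The key point is an a~priori bound: if $w\in\L$ and $\mu(aw)=\mu(wa)$, then applying the two-sided fellow-traveller property of the biautomatic structure to the equal words $aw$ and $wa$ (take $w'=w$, $a'=a$ in the property recalled before Proposition~\ref{find}) gives $|c_t|\le k+1$ for all $t$, so $D_a$ never dies on such a $w$ and accepts it precisely because $c_n=\mu(w)^{-1}\mu(a)\mu(w)=\mu(a)$; conversely any $w\in\L$ with $\mu(aw)\ne\mu(wa)$ has $c_n\ne\mu(a)$ and is rejected. Hence $L(D_a)\cap\L=\{w\in\L:\mu(w)\text{ commutes with }\mu(a)\}$.

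Finally, since $Z(\Gamma)=\{g\in\Gamma:g\text{ commutes with }\mu(a)\text{ for every }a\in\A\}$, a standard product construction yields
\[
\L\cap\mu^{-1}(Z(\Gamma))\;=\;\L\cap\bigcap_{a\in\A}L(D_a),
\]
which is regular and is computed effectively from the word acceptor of $\L$ and the finitely many $D_a$. The only genuine obstacle is keeping the state space of $D_a$ finite, i.e.\ the bound $|c_t|\le k+1$; this is exactly where biautomaticity, rather than mere automaticity, is used, since it is the \emph{left}-hand fellow-traveller property (the assertion that $\mu$ restricted to $\L^{-1}$ is also an automatic structure) that controls $\mu(aw)$ versus the prefixes of $w$. (An essentially equivalent alternative, avoiding the explicit ball bookkeeping, is to express $\{w\in\L:\mu(w)\text{ commutes with }\mu(a)\}$ as the projection of the synchronous intersection of the left multiplier for $a$, the right multiplier for $a$, and the equality recognizer, the fellow-traveller constant ensuring the lengths involved stay boundedly synchronized; I would use whichever is more convenient downstream.)
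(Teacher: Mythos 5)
Your proof is correct and follows essentially the same route as the paper: for each generator $a$ you build the finite automaton whose states are the elements of a ball about the identity (tracking the conjugate $\mu(p)^{-1}\mu(a)\mu(p)$ over prefixes $p$) plus a fail state, use the two-sided fellow-traveller property to guarantee these conjugates stay in the ball for the words of interest, and then intersect over $a\in\A$ and with $\L$. The only differences are cosmetic (you use the ball of radius $k+1$ where the paper uses radius $k$, and you spell out the computability of the transition function via the word problem).
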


\begin{proof}
Denote by $C(g)$ the centralizer of a group element $g$. Setting
$\L_a=\L\cap\mu^{-1}(C(\mu(a)))$, we have $\L\cap
\mu^{-1}(Z(\Gamma))=\cap_{a\in\A}\L_a$.  If the fellow-traveller
constant of the biautomatic structure is $k$, then by definition
$d(1, \mu(p^{-1}ap)) \le k$ for all prefixes $p$ of  words
$w\in\L$ such that $\mu(wa)=\mu(aw)$. Thus, writing $\P^k_a$ for
the language of words $w\in\A^*$ such that $d(1, \mu(p^{-1}ap))
\le k$ for all prefixes $p$ of $w$, we see that $\L_a\subseteq
\P^k_a$. Therefore, since the intersection of finitely many
regular languages is regular, it suffices to construct a finite
state automaton over $\A$ with accepted language $\P^k_a\cap
\mu^{-1}(C(\mu(a)))$.

The set of  states of the desired automaton is the ball $B(k,1)$
of radius $k$ about $1\in\Gamma$ in the word metric, together
with one other (fail) state $\phi$. The state corresponding to
$\mu(a)$ is both the start and (unique) accept state. The
transitions are given by
\begin{align*}
B(k,1)\ni g&\overset{b\in\A}\longmapsto
\begin{cases}
\mu(b)^{-1}g\mu(b)&\text{if}\ \mu(b)^{-1}g\mu(b)\in B(k,1)\\
\phi&\text{otherwise}
\end{cases}\\
\phi&\overset{b\in\A}\longmapsto\phi\\
\end{align*}
That is, if the machine is in state $g$ when it reads the letter
$b$ from the input tape, then it moves to the fail state if
conjugation by $\mu(b)$ sends $g$ to an element outside the ball
$B(k,1)$, and it moves to $\mu(b)^{-1}g\mu(b)$ if it is in the
ball.
\end{proof}

\begin{prop}
There is an algorithm that, given a finite presentation $\langle
\A\mid \R \rangle$ for a biautomatic group $\Gamma$, will calculate
a finite set of words in $\A^*$ that generates the centre of
$\Gamma$, and will give a finite presentation of $Z(\Gamma)$ in
terms of these generators.
\end{prop}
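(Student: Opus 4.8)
The plan is to extract from the preceding lemma a finite state automaton accepting $L_Z:=\L\cap\mu^{-1}(Z(\Gamma))$, to read an explicit finite generating set for $Z(\Gamma)$ off the loop structure of that automaton, and then to recover the (necessarily abelian) relations. First I would apply Proposition~\ref{find} to obtain a biautomatic structure $\mu\colon\L\twoheadrightarrow\Gamma$ together with a fellow-traveller constant; passing to a sublanguage (this is standard, cf.\ \cite[\S2.5]{E+}) I may assume $\mu|_\L$ is injective, and from the automata I may compute constants $\lambda\ge1$, $c\ge0$ with $d(1,\mu(w))\le|w|\le\lambda\,d(1,\mu(w))+c$ for every $w\in\L$. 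Applying the preceding lemma and then trimming --- discarding states not reachable from the start state or not co-reachable to an accept state --- yields a finite state automaton $W$, with $s$ states say, accepting $L_Z$; note $\mu$ restricts to a \emph{bijection} $L_Z\to Z(\Gamma)$.

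The heart of the matter is that $Z(\Gamma)$ is generated by the finite set $S:=\{\mu(w)\mid w\in L_Z,\ |w|\le3s\}$; equivalently, $\mu(w)\in\langle S\rangle$ for every $w\in L_Z$, which I would prove by induction on $|w|$. If $|w|\le3s$ this is immediate. If $|w|\ge s$ the accepting path of $w$ repeats a state, so $w=u\ell v$ with $\ell$ a nonempty simple loop at some state $q$; then $u\ell v\in L_Z$ and (deleting the loop) $uv\in L_Z$, so $\mu(u)\mu(\ell)\mu(u)^{-1}=\mu(u\ell v)\mu(uv)^{-1}$ lies in $Z(\Gamma)$ and $\mu(w)=\bigl(\mu(u)\mu(\ell)\mu(u)^{-1}\bigr)\mu(uv)$. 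Now any two central elements of the form $x\mu(\ell)x^{-1}$ coincide: if $x\mu(\ell)x^{-1}=c$ is central then $y\mu(\ell)y^{-1}=(yx^{-1})c(yx^{-1})^{-1}=c$ for all $y$. Hence, fixing a shortest path label $p$ from the start to $q$ and a shortest $v_0$ from $q$ to an accept state, $\mu(u)\mu(\ell)\mu(u)^{-1}=\mu(p)\mu(\ell)\mu(p)^{-1}=\mu(p\ell v_0)\mu(pv_0)^{-1}$, with $p\ell v_0,\,pv_0\in L_Z$ of length $\le3s$; so this element, and therefore $\mu(w)$, lies in $\langle S\rangle$ once we know $\mu(uv)\in\langle S\rangle$, which holds by induction since $uv\in L_Z$ is strictly shorter than $w$. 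The set $S$, and explicit words over $\A$ representing its elements, are computed from $W$.

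For the presentation, write $S=\{z_1,\dots,z_m\}$ with $z_i=\mu(W_i)$, $W_i\in\A^*$. Since $Z(\Gamma)$ is now known to be finitely generated abelian, $Z(\Gamma)\cong\Z^m/R$ where $R$ is the kernel of $(e_1,\dots,e_m)\mapsto\mu(W_1^{e_1}\cdots W_m^{e_m})$, and membership in $R$ is decidable using the word problem in $\Gamma$. To determine $R$ one needs its rank $m-r$, where $r$ is the torsion-free rank of $Z(\Gamma)$; and $r$ is computable from $W$ as the polynomial growth degree of the regular language $L_Z$, because the quasigeodesic bound and the bijection $L_Z\to Z(\Gamma)$ force $\#\{w\in L_Z\mid|w|\le n\}\asymp n^r$. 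Knowing $m-r$, I enumerate $R$ until $m-r$ of its elements are $\mathbb Q$-independent; the saturation of the subgroup they generate (the intersection of its $\mathbb Q$-span with $\Z^m$) is computable and contains that subgroup with finite index, so $R$ is the unique maximal one among the finitely many explicitly listable intermediate subgroups all of whose generators lie in $R$. A $\Z$-basis of $R$, rewritten multiplicatively in the $z_i$, together with the commutators $[z_i,z_j]$, is the desired finite presentation of $Z(\Gamma)$.

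The main obstacle is the generation step and, bound up with it, the computation of $r$: the former rests on the independence from $x$ of the central conjugate $x\mu(\ell)x^{-1}$, which is precisely what makes deleting a loop from a word compatible with a \emph{fixed} finite generating set; and $r$ is what turns the otherwise only semi-decidable search for a \emph{complete} list of relations into an algorithm. The other ingredients --- the word problem, the preceding lemma, trimming and growth-series computations for finite state automata, and linear algebra over $\Z$ --- are routine.
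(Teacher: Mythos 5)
Your argument is correct, but it reaches both conclusions by a genuinely different route from the paper's. The paper reads everything off the fellow-traveller constant $k$: since the automaton of the preceding lemma has state set $B(k,1)$, the authors deduce that $Z(\Gamma)$ is generated by words of length at most $2k+1$, which they locate by listing short words equal to $1$; for the presentation they invoke quasiconvexity of $Z(\Gamma)$ (Gersten--Short), with a constant $K$ computable from $k$, and the standard fact that a quasiconvex subgroup is presented by its relations of bounded length. You instead run a pumping-lemma induction on the trimmed automaton for $L_Z=\L\cap\mu^{-1}(Z(\Gamma))$, using the pleasant observation that a \emph{central} conjugate $x\mu(\ell)x^{-1}$ is independent of $x$, so that deleting a simple loop costs only elements of a fixed finite set $S$; and for the relations you exploit abelianness, determining the relation subgroup $R\le\Z^m$ by first computing the torsion-free rank $r$ from the growth degree of $L_Z$ and then finishing with linear algebra over $\Z$ and a finite search between a full-rank subgroup of $R$ and its saturation. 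Your generation step is self-contained and avoids quasiconvexity constants entirely, while your presentation step in effect anticipates the paper's Section 6 (Lemma \ref{growth}), which the authors themselves advertise as a more efficient, rational-structure-based alternative to their quasiconvexity bound. The one place you should add a line of justification is the upper bound in $\#\{w\in L_Z:|w|\le n\}\asymp n^{r}$: bijectivity plus the quasigeodesic inequality only compare this count to the number of elements of $Z(\Gamma)$ in the radius-$n$ ball of $\Gamma$, and bounding \emph{that} by $O(n^{r})$ requires $Z(\Gamma)$ to be undistorted in $\Gamma$ --- a distorted $\Z$ can have superpolynomial relative growth. The undistortion is true here, and in fact falls out of your own loop-deletion induction (each deletion strictly shortens the word and costs at most two elements of $S^{\pm1}$, so $d_{Z,S}(1,z)\le 2\lambda\, d_\Gamma(1,z)+2c+1$), but it is not a formal consequence of bijectivity alone and should be stated.
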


\begin{proof}
Proposition \ref{find} yields an explicit biautomatic structure
for $\Gamma$, together with a fellow-traveller constant $k>0$ for
that structure. In the course of the  preceding proof we
implicitly showed that $Z(\Gamma)$ is generated by words from
$\A^*$ that have length at most $2k+1$. Thus, in order to obtain
an explicit set of generators for $Z(\Gamma)$ we need only check
which words of length at most $2k+1$ commute with all of the
generators $\A$ of $\Gamma$. And this one can do by listing all of
the words of length at most $4k+4$ that represent the identity in
$\Gamma$, using Lemma \ref{list}.

In fact, the preceding proof shows that $Z(\Gamma)$ is a
quasiconvex subgroup of $\Gamma$ (a result originally due to
Gersten and Short \cite{GS}) with a quasiconvexity constant $K$
that can be calculated from $k$ (see \cite{AB} pages 94--95). It
follows  that one obtains a finite
presentation for $Z(\Gamma)$ by simply calculating which
concatenations of strings of generators of $Z(\Gamma)$, with total
length  $2(2k+1)+2(2K+[K(2k+1)]$ in $\A^*$, represent $1\in
\Gamma$.
\end{proof}

We can now complete the proof of Proposition \ref{centre}. Given a
presentation of a biautomatic group, we calculate a finite
presentation of $Z(\Gamma)$ as above. The isomorphism problem for
abelian groups is solvable and one can make an explicit list $(P_n)$ of
finite presentations, exactly one for each isomorphism type of
finitely generated abelian group. One then looks for an
isomorphism between  $Z(\Gamma)$ and the groups on this
list by simply enumerating homomorphisms from  $Z(\Gamma)$ to each of the
groups and {\em{vice versa}}, looking for an inverse pair,  as in Lemma \ref{partisoalg}.
In more detail: for each of the presentations on the list $(P_n)$, the construction 
of Lemma \ref{partisoalg} provides
a partial algorithm that will successfully terminate if the  group $G_n=|P_n|$  is isomorphic
to $Z(\Gamma)$; the algorithm that we are describing here runs in a diagonal manner ---
first it runs one step of the procedure that looks for an isomorphism between $Z(\Gamma)$ and $G_1$,
then one step of the procedure comparing $Z(\Gamma)$ to $G_2$, then a further  two steps comparing
$Z(\Gamma)$ to $G_1$,  to $G_2$, and to $G_3$; then a further three steps of the  
procedures comparing $Z(\Gamma)$ to  $G_1$,  to $G_2$, to $G_3$, and to $G_4$, 
and so on.

This completes the proof of Proposition \ref{centre}.

\begin{remark}\label{sec1}
In deciding the isomorphism class of $Z(\Gamma)$ above, we
appealed to the general solution for the isomorphism problem for
finitely generated abelian groups. In the final section we present
some results which provide a more efficient search that exploits
the rational structure on $Z(\Gamma)$.
\end{remark}

In the proof of Proposition \ref{centre}
we used the following general and well-known result, which we make explicit
for the sake of clarity. 

\begin{lem} \label{partisoalg}
There is a partial algorithm that,
given two finite presentations, will search for an
isomorphism between the two groups: if the presentations define
isomorphic groups then this procedure  will eventually halt;
if the groups are not isomorphic then it will not terminate.
\end{lem}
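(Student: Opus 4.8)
The plan is to exploit the fact that, although the word problem in a finitely presented group need not be solvable, the set of words representing the identity is always recursively enumerable. Concretely, a word $w$ over the generators of a presentation $\langle \A\mid\R\rangle$ equals $1$ in the group if and only if it lies in the normal closure of $\R$, i.e.\ can be written as a product of conjugates of elements of $\R^{\pm 1}$; enumerating all such products gives a partial algorithm that halts precisely on the words that represent $1$. So one can always \emph{verify} an equality in a finitely presented group, even though one cannot in general refute one.

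First I would fix finite presentations $\langle \A_1\mid\R_1\rangle$ and $\langle\A_2\mid\R_2\rangle$ for the two groups $G_1$ and $G_2$. A homomorphism $G_1\to G_2$ is specified by a choice of word $\phi(a)\in\A_2^*$ for each $a\in\A_1$, and such a choice extends to a homomorphism exactly when $\phi(r)=1$ in $G_2$ for every $r\in\R_1$; by the preceding paragraph (applied to $G_2$) this condition is confirmable by a partial algorithm, and symmetrically for a choice of words $\psi(b)\in\A_1^*$ for each $b\in\A_2$. Next, a pair $(\phi,\psi)$ of homomorphisms $G_1\to G_2$ and $G_2\to G_1$ is a pair of mutually inverse isomorphisms if and only if $\psi(\phi(a))\,a^{-1}=1$ in $G_1$ for all $a\in\A_1$ and $\phi(\psi(b))\,b^{-1}=1$ in $G_2$ for all $b\in\A_2$; these are again finitely many equalities, each confirmable by a partial algorithm. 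Note also that if $G_1\iso G_2$ then at least one such pair $(\phi,\psi)$ exists.

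The desired procedure then dovetails over all pairs of functions $(\phi\colon\A_1\to\A_2^*,\ \psi\colon\A_2\to\A_1^*)$: for each pair it runs, interleaved, the finitely many partial algorithms that verify that $\phi$ and $\psi$ are homomorphisms and that they are mutually inverse, advancing every launched subprocedure a bounded amount at each stage. If for some pair all of these subprocedures halt successfully, the procedure outputs ``isomorphic'' and stops. If $G_1\iso G_2$, a witnessing pair occurs somewhere in the enumeration and all its associated subprocedures terminate, so the procedure halts; if $G_1\not\iso G_2$, no such pair exists and the procedure runs forever. The only points needing care are the recursive enumerability of the trivial words and the bookkeeping of the dovetailing so that no single non-halting subprocedure can block progress; both are entirely routine.
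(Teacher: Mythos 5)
Your proposal is correct and follows essentially the same route as the paper: enumerate candidate pairs of generator-assignments, use the recursive enumerability of the set of words trivial in a finitely presented group (as products of conjugates of relators) to verify the homomorphism and mutual-inverse conditions, and dovetail all these verifications. No gaps.
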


\begin{proof} Given finite presentations $ \< A_1\mid R_1\>$ and
$\<A_2\mid R_2\>$ defining groups $G_1$ and $G_2$, one can enumerate all 
homomorphisms from $G_1$ to $G_2$ by running through all choices of 
words $\{u_a \mid a\in A_1\}$ in the free group on $A_2$, treating these
as putative images of the generators of $G_1$ in $G_2$: one freely reduces
the words obtained by substituting $u_a$ for each occurrence of $a$ in the
relations $R_1$ -- call these words $\{\rho_r\mid r\in R_1\}$;
one  tries to verify that
$a\mapsto u_a$ defines a homomorphism $G_1\to G_2$ by listing
all products of conjugates of the relations $R_2^{\pm 1}$, freely reducing them
and comparing the freely-reduced form to the words $\rho_r$; the assignment
$a\mapsto u_a$ defines a homomorphism $G_1\to G_2$ if and only if this
procedure eventually produces all of the words $\rho_r$.

One applies the same process with the indices reversed  to enumerate all
homomorphisms from $G_2$ to $G_1$. In parallel, one tests all pairs of
homomorphisms $f_1:G_1\to G_2$ and $f_2:G_2\to G_1$ that are found in order to
see if they are mutually inverse. Again this test is carried out by a naive search:
the homomorphisms are described by explicit formulae saying where they send the
generators, so to check that $f_2\circ f_1  = {\rm{id}}_{G_1}$, for example,
one simply has to check if a list of words $(w_a : a\in A_1)$ defines the same
indexed set of elements of $G_1$ as $(a : a\in A_1)$; one is interested only
in a positive answer, so one does not need a solution to the word problem for
this, one just enumerates all products of conjugates of the relations $R_1^{\pm 1}$,
checking to see if each is {\em{freely}} equal to $a^{-1}w_a$.
\end{proof}

\section{A reduction of Theorem A}\label{s:reduce}

Given a group $G$ and an abelian group $A$, a group $E$ is called
a central extension of $G$ by $A$ if there is a short exact
sequence
$$0\to A\to E\to G\to 1$$
and the map $G\to \operatorname{Aut}(A)$ induced by conjugation in
$E$ is trivial. We remind the reader that such central extensions
are classified up to equivalence by the cohomology class $[z]\in
H^2(G,A)$ of the cocycle $z: G\times G\to A$ that is defined by choosing a
set-theoretic section $s:G\to E$ of the given surjection and
setting $z(g,g')= s(gg')s(g')^{-1}s(g)^{-1}$ (see \cite[Section
IV.3]{B}). 

Suppose now that $\G$ and $\Q$ are as in the statement of Theorem
A, and that we are given finite presentations
$\langle\A_1\mid\R_1\rangle$ and $\langle\A_2\mid\R_2\rangle$ for
groups $G_1$, \mbox{$G_2\in \G$}. Denote by $Z_1$  the centre of
$G_1$ and denote by $Q_1$  the quotient $G_1/Z_1$. Define $Z_2$
and $Q_2$ similarly. The results of the previous section and the
hypothesis that the isomorphism problem is solvable in $\Q$ allow
us to decide the isomorphism types of the groups
$Q_1,Q_2,Z_1,Z_2$. If $Q_1\not\iso Q_2$  or $Z_1\not\iso Z_2$,
then we conclude that the original groups $\Gamma_1$ and
$\Gamma_2$ are not isomorphic. Thus Theorem A has been reduced to
a problem of deciding whether two central extensions are equivalent.

Suppose now that $Q_1\iso Q_2$ and $Z_1\iso Z_2$, and refer to
these groups as $Q$ and $Z$ respectively. 
In Section \ref{s:KG1} we will construct an
irredundant enumeration of the possible central extensions of $Q$
by $Z$. In the light of 
the following observation, this enumeration allows us to
determine whether $G_1$ and $G_2$ are isomorphic.

\begin{lem}\label{iso}
Let  $\G$ be a class of finitely presented groups.
Given an
irredundant enumeration of (presentations for) the groups in $\G$,
one can decide whether or not an arbitrary pair of finite presentations
of groups from $\G$ define isomorphic groups.
\end{lem}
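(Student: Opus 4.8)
The plan is to use the irredundant enumeration to attach to each input presentation a canonical \emph{index} --- the position in the list occupied by a presentation of its isomorphism class --- and then merely compare the two indices. The engine that makes this work is the partial isomorphism-search procedure of Lemma~\ref{partisoalg}: although it cannot certify non-isomorphism, it is guaranteed to halt whenever the two groups fed to it \emph{are} isomorphic, and a halt comes with an explicit pair of mutually inverse homomorphisms, so halting is a genuine positive certificate.

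Write $L_1,L_2,\dots$ for the given recursive list of finite presentations, chosen so that every group in $\G$ is isomorphic to $|L_i|$ for exactly one index $i$. Given input presentations $P_1$ and $P_2$, both assumed to present groups in $\G$, I would run, as a single dovetailed process, the partial algorithm of Lemma~\ref{partisoalg} on every pair $(P_1,L_i)$ and every pair $(P_2,L_i)$, $i=1,2,\dots$. Since $|P_1|\in\G$, it is isomorphic to $|L_{i_1}|$ for some $i_1$, and the search on $(P_1,L_{i_1})$ will eventually halt; likewise the search on $(P_2,L_{i_2})$ halts, where $i_2$ is the index of the isomorphism class of $|P_2|$. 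The master process waits until it has observed one halt of each kind and records the witnessing indices $i_1$ and $i_2$; termination of this wait is exactly what the hypothesis $|P_1|,|P_2|\in\G$ guarantees.

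Finally I would declare $|P_1|\iso|P_2|$ precisely when $i_1=i_2$. Correctness is immediate from irredundancy: since no two distinct members of the list present isomorphic groups, $|P_1|\iso|L_{i_1}|$ forces $i_1$ to be \emph{the} index of the isomorphism class of $|P_1|$, and similarly for $i_2$, so $|P_1|$ and $|P_2|$ are isomorphic if and only if $i_1=i_2$. There is no real obstacle here beyond the bookkeeping of the dovetailing; the one point that genuinely needs care is the role of irredundancy. If the list merely enumerated the groups of $\G$ with repetitions allowed, two halting searches could return different indices for isomorphic groups, and one would be left with no more than the semi-decision procedure already available by running Lemma~\ref{partisoalg} directly on $(P_1,P_2)$; it is precisely irredundancy that makes the recorded index a well-defined isomorphism invariant and thereby supplies the negative answers.
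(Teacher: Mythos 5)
Your argument is correct and is essentially the paper's own proof: both locate the unique index in the irredundant enumeration matching each input presentation via a dovetailed application of Lemma~\ref{partisoalg}, and then compare the two indices. The only (immaterial) difference is that you dovetail the two searches into a single process, whereas the paper runs them one after the other.
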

\begin{proof}
Given two finite presentations  $G_1=\< \A_1\mid \R_1\>$ and
$G_2=\< \A_2\mid \R_2\>$ with $G_1,G_2\in\G$, one sets $G_2$
aside and searches the enumeration of $\G$ to identify which group
on the list is isomorphic to $G_1$. One does this by using Lemma \ref{partisoalg}
repeatedly, as in the proof of Proposition \ref{centre}. 
Repeating this procedure with $G_2$ in place of $G_1$
 will determine whether or not $G_1$
and $G_2$ are isomorphic to the same element in the enumeration of
$\G$ and hence to one another.
\end{proof}

\section{Algorithmic construction of 
classifying spaces}\label{s:KG1}\label{s:makeKG1}

The considerations in the previous section compel us to
enumerate the possible central extensions of $Q$ by
$Z$, and for this we need an algorithm
 to calculate $H^2(Q, Z)$ starting from
any finite presentation of $Q$. More generally,  we wish to calculate $H^*(Q, A)$
and $H_*(Q, A)$, where $A$ is a finitely generated abelian
group. We shall achieve this by describing an
algorithm that constructs finite skeleta of
a $K(Q,1)$. This construction (Theorem \ref{real}) depends
on something less than the existence of a combing of $Q$ and
explicit knowledge of the fellow-traveller constant
for this combing; in the case of automatic groups, this
constant can be calculated as in Lemma 2.3.2 of \cite{E+}
(cf. Proposition 1.1). The
construction is similar to that described by
S.~M.~Gersten \cite{Ger} in proving that asynchronously automatic groups
are of type ${\rm{F}}_3$. See also
\cite{alonso} and  \cite[section 10.2]{E+}.

\medskip
 
Each attaching map in our $K(Q,1)$ will be given
by a subdivision map followed by a restricted form of
singular combinatorial map. By definition,
if  $f:L\to K$ is a
\emf{singular combinatorial} map
 between CW-complexes then for every open $n$-cell
$\sigma\in L$, either the restriction of $f$ to $\sigma$ is a
homeomorphism onto an open $n$-cell 
of $K$, or else $f(\sigma)\subset K^{(n-1)}$. The more restrictive
notion of a \emf{semi-combinatorial map}  is obtained by 
requiring that if $f(\sigma)\subset K^{(n-1)}$ then 
$f|_\sigma$ is a constant map.  
A semi-combinatorial complex is
one where the attaching maps of all cells are semi-combinatorial.

The $K(Q,1)$ that we will construct is
 a CW-complex in which many $n$-cells are standard $n$-cubes,
combinatorially. The attaching maps of the remaining cells are
defined on subdivisions of a restricted type on the boundary of
a standard cube; the attaching maps themselves are a restricted
kind of semi-combinatorial map.

\subsection{$k$-Lipschitz contractions}

We remind the reader that a group $G$ with finite generating set
$\A=\A^{-1}$ is said to be \emf{combable} if there is a
constant $k$ and a (not necessarily regular) sublanguage
$\{\sigma_g: g\in G\} \subset\A^*$ mapping bijectively to $G$
under the homomorphism $\A^*\to G$ such that $\rho(\sigma_g(t),
\sigma_{ga}(t))\le k$ for all $g\in G$,  $a\in\A$ and all integers
$t>0$, where $\rho$ is the word metric associated to $\A$ and 
$\sigma_g(t)$ is the image in $G$ of the prefix of
length $t$ in $\sigma_g$ (this prefix is taken to be equal to the
whole word if $t$ is greater than the length of the word).
Such a constant $k$ is called a \emf{fellow-traveller constant}.

One says that a finitely generated group $G$ {\emf{admits  $k$-Lipschitz contractions}}
if, given every finite subset $S\subset G$, there is a map  $H_S:S\times\mathbb N\to G$ such that for all $s,t\in S$ and $n\in\N$
we have
 $\rho(H_S(s,n),H_S(t,n))\le k\, \rho(s,t)$ 
and $\rho(H_S(s,n),H_S(s,n+1))\le 1$, with
 $H_S$  constant on $S\times \{0\}$ and $H_S(\ast,n)={\rm{id}}_S$ for $n$ sufficiently large (where $\rho$ denotes
the word metric).

If $G$ is combable with combing $\sigma_g$ and fellow-traveller constant $k$,
then one obtains  $k$-Lipschitz contractions  by defining $H_S(s,n)=\sigma_s(n)$, regardless of $S$.
More generally, groups that admit a coning of finite (asynchronous) width in the sense of 
\cite{mrb:geom} admit $k$-Lipschitz contractions.

A well-known argument that has appeared in many forms
 uses van Kampen diagrams to show that combable groups are finitely
presented and satisfy an exponential isoperimetric inequality. This argument
originated  in \cite{E+} (pages 52 and 152); cf.~\cite{mrb:geom} page 600.
 We record a version of it here because
it provides the template for the homotopies described in Section \ref{s:homotopies}.
We remind the reader that if  a word $w$ in the letters $\A$  represents the
identity in  $G=\<\A\mid R\>$, then
$\area(w)$ is the least integer $N$
such that $w$ can be expressed in the free group $F(\A)$ as a product of $N$ conjugates of 
the defining relators  and their inverses. 

\begin{prop}\label{p:fp} If $G$ admits $k$-Lipschitz contractions then $G=\<\A\mid R\>$
where  $R$ consists of all
words of length at most $2(k+1)$ that represent the identity in $G$.  Moreover, 
if $w=1$ in $G$, then
$\area(w)\le |w|. (|\A|+1)^{k|w|}$. \end{prop}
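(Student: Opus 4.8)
The plan is to run the classical van Kampen diagram argument, using the $k$-Lipschitz contractions in place of a combing to fill loops. First I would fix a word $w=a_1\cdots a_n$ with $w=1$ in $G$, and let $S=\{1,\,\mu(a_1),\,\mu(a_1a_2),\ldots,\mu(a_1\cdots a_n)\}\subset G$ be the set of vertices visited by the path traced by $w$ in the Cayley graph; note $|S|\le |w|$ and $H_S(\ast,0)$ is a single point $g_0\in G$. Applying $H_S$, for each level $n\in\mathbb N$ we obtain a loop in the Cayley graph passing through the points $H_S(\mu(a_1\cdots a_i),n)$; since consecutive points on a given level are at distance $\le k\,\rho(s,t)$ and here consecutive $s,t$ are adjacent (distance $1$), consecutive images are at distance $\le k$, so we may join them by a geodesic word of length $\le k$. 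Between level $n$ and level $n+1$, each vertex moves by distance $\le 1$, so the "vertical" edges contribute words of length $\le 1$. For $n$ large the loop is constant ($H_S(\ast,n)=\mathrm{id}_S$ eventually, and at the top it is the path of $w$ translated), and at the bottom it is the constant point $g_0$. This produces a van Kampen diagram for $w$ built out of small "rectangles": each cell of the diagram is a loop of the form (horizontal edge of length $\le k$ at level $n$)$\cdot$(vertical edge $\le 1$)$\cdot$(horizontal edge $\le k$ at level $n+1$)$\cdot$(vertical edge $\le 1$), hence a loop of length at most $2(k+1)$, which therefore represents a relator in $R$ by the first part of the proposition. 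Counting: there are $\le |w|$ columns and, since geodesics from any point to the cone-point $g_0$ have length $\le k|w|$ — the $k$-Lipschitz bound gives $\rho(H_S(s,0),H_S(s,n))$ can be as large as the diameter, but more carefully $\rho(s,g_0)\le \rho(s, s')+\rho(s',g_0)\le \cdots$; the relevant bound is that the total number of levels needed is at most $k|w|$, since at each level every point is within $k|w|$ of where it started and moves distance $\le 1$ — one sees at most $|w|\cdot(k|w|)$ cells, but I would instead track the number of cells more crudely as at most $|w|$ times the number of levels, which is at most $k|w|$, giving $\area(w)\le k|w|^2$; to get the stated bound $|w|\cdot(|\mathcal A|+1)^{k|w|}$ one replaces the naive level count by the observation that a level path of length $\le k|w|$ over alphabet $\mathcal A$ (of size $|\mathcal A|$, together with possibly an empty step) can be one of at most $(|\mathcal A|+1)^{k|w|}$ distinct paths before the contraction necessarily stabilizes, bounding the number of distinct levels and hence of rows of cells.

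More precisely, for the area count I would argue: the contraction starts at the point $H_S(\ast,0)=g_0$ (a degenerate path of length $0$) and ends, after finitely many steps, at the path traced by $w$; at each level the image path has length at most its length at the previous level plus $\rho$-changes bounded by the vertical moves, and in fact the length of the level-$n$ path is bounded by $k|w|$ for all $n$ (since $H_S(\cdot,n)$ is $k$-Lipschitz on $S$ and $S$ has diameter $\le |w|$, wait — rather: the level-$n$ path has at most $|w|$ segments each of length $\le k$, so length $\le k|w|$). A path of length $\le k|w|$ in the $1$-skeleton is determined by a word of length $\le k|w|$ in $\mathcal A^*$, and there are at most $(|\mathcal A|+1)^{k|w|}$ such words (the $+1$ accounting for the convention that a segment may be trivial when $\rho(s,t)<k$; alternatively pad to exactly this length). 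Hence the contraction passes through at most $(|\mathcal A|+1)^{k|w|}$ distinct level-paths before stabilizing, so the diagram has at most that many rows, and each row has at most $|w|$ cells; multiplying gives $\area(w)\le |w|\cdot(|\mathcal A|+1)^{k|w|}$.

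The first assertion — that $R$ may be taken to be all words of length $\le 2(k+1)$ representing $1$ — is immediate once the diagram is constructed: every $2$-cell of the diagram we built is a loop of length $\le 2(k+1)$ that bounds in $G$, so it is a consequence of the relators in $R$ (indeed it is one of them), and since $w$ is a product of (conjugates of) these cell-boundaries, $w$ lies in the normal closure of $R$; applying this to a generating loop shows $\langle\mathcal A\mid R\rangle$ surjects onto $G$ with trivial kernel. I expect the only genuinely delicate point to be the bookkeeping that turns the "rectangular grid" picture into an honest van Kampen diagram — one must be a little careful that consecutive vertical edges from a shared vertex agree (so that the rectangles glue along edges rather than only along vertices), which is handled by choosing, once and for all, for each $s\in S$ and each $n$, a single geodesic word from $H_S(s,n)$ to $H_S(s,n+1)$; with that choice fixed the diagram is manifestly planar and the cell count above is valid. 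The Lipschitz/fellow-traveller estimates are exactly as in \cite{E+} pages 52 and 152 and \cite{mrb:geom} page 600, and I would simply cite the shape of that argument rather than redrawing every diagram.
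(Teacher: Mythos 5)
Your overall strategy is the same as the paper's: trace the loop $w$ in the Cayley graph, apply the $k$-Lipschitz contraction $H_S$ level by level, join corresponding vertices by geodesic words of length at most $k$ (horizontally) and at most $1$ (vertically), and observe that each resulting square has boundary length at most $2(k+1)$; this gives simple connectivity of the presentation complex and hence the first assertion, exactly as in the paper. The gap is in the area count. You bound the number of \emph{distinct} level-paths by $(|\A|+1)^{k|w|}$ and then assert that the contraction passes through at most that many distinct level-paths ``before stabilizing'', so the diagram has at most that many rows. That inference fails: the definition of a $k$-Lipschitz contraction places no bound whatsoever on the number $N$ of steps before $H_S(\ast,n)$ becomes the identity, and a repetition of a level-path at heights $n<n'$ does not force the contraction to be constant on $[n,n']$ or to stabilize afterwards. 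The diagram you actually build has $N$ rows, and $N$ may vastly exceed the number of distinct level-paths, so the pigeonhole count by itself bounds nothing.

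The missing step is a surgery/minimality argument, which is how the paper closes this. One considers all admissible vertex-labellings $h$ of $L\times[0,M]$ (vertical displacement $\le 1$, horizontal displacement $\le k$ between adjacent vertices of a level, constant at the bottom, equal to $l$ at the top) and takes $M$ minimal among those for which such a labelling exists (your construction shows this set is nonempty). If the $|w|$-tuples of edge-words at two levels $n<n'$ coincide, the two level-loops differ by a left translation, so by left-invariance of the word metric one can excise $L\times[n,n']$, translate the portion below level $n$, and re-glue to obtain an admissible labelling with smaller $M$ --- contradicting minimality. Hence for the minimal $M$ no tuple repeats, and only then does the count of tuples (each of the $|w|$ words may be taken positive or empty of length at most $k$, since $\A=\A^{-1}$, giving at most $(|\A|+1)^{k|w|}$ tuples) bound the number of rows; combined with the easy direction of van Kampen's lemma this yields $\area(w)\le |w|\cdot(|\A|+1)^{k|w|}$. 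With this excision step inserted, your argument is complete and coincides with the paper's.
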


\begin{proof} We must prove that the 2-complex $X$ obtained from the Cayley graph $\mathcal C_\A(G)$
by attaching 2-cells along all loops of length at most $2(k+1)$ is simply connected. For this it is enough
to explain how to contract any edge-loop in $X$. Let $l:L\to X^{(1)}$ be an edge loop
labelled by $w\in\A^*$
(with $L=[0,|w|]\subset\mathbb R$).
Let
$S$ be the set of vertices in the image of $h$ and let $H_S:S\times\mathbb N\to G$  be as in the definition of
$k$-Lipschitz contractions. Let $N\in\N$ be the least integer such that $H_S(\ast,N)={\rm{id}}_S$.
We cellulate  $D=L\times [0,N]$ as a squared complex in
the obvious manner. Then $h_S(x,n):=H_S(l(x),n)$ is a map  from  the 0-skeleton of $D$ to $G$. 
Given a directed 1-cell in $D$ with initial vertex  $u$ and terminus $v$, we label it
 by a shortest word in $\A^*$ that equals  
$h(u)^{-1}h(v)$. Note that this word (which may be empty) has length at most $1$ if $u=(x,n)$ and
$v=(x,n+1)$, and length at most $k$ if $u=(x,n)$ and $v=(y,n)$. By construction, there is a 2-cell in
$X$ whose attaching map describes the loop in  $\mathcal C_\A(G)$ labelled by the word that one reads
around the boundary of each 2-cell in $D$. (Edges labelled by the empty word are collapsed as are 2-cells whose entire boundary is collapsed.)
Thus the map $D^{(1)}\to X^{(1)}$ that extends $h_S$
and is described by the labelling of $1$-cells, extends to a map from $D$ to $X$. This map gives a 
contraction of the original loop  $l$.

We define $\Lambda(w)$ to be the set of positive integers $M$ for which there is a map $h:(L\times [0,M])^{(0)} \to G$ with the following properties:

\begin{itemize} 
\item $\rho(h(u,n), h(u,n+1)) \le 1$ for all $u\in L$ and $n<M$

\item
$\rho(h(u,n), h(v,n))\le k$ for all adjacent vertices $u,v\in L$ 

\item
$h|_{L^{(0)}\times\{M\}}$ agrees with $l$ and $h|_{L^{(0)}\times\{0\}}$ is a constant map
\end{itemize}

The preceding argument shows that  $\Lambda(w)$ is non-empty. It also shows that $h$
 extends to a map $L\times [0,M]\to X$ that sends each open 2-cell of
$L\times [0,M]$ homeomorphically to an open 2-cell of $X$, or else collapses it.
The easier implication in van Kampen's Lemma 
(see \cite{mrb:bfs} p.49) implies that $\area(w)\le M\, |w|$ for all $M\in\Lambda(w)$ .
Thus the lemma will be proved if we can argue that 
$\min \{ M \mid M\in\Lambda(w)\}\le (|\A|+1)^{k|w|}$.
Since $\A=\A^{-1}$ generates $G$, the shortest words representing each of the word differences
$h(x,n)^{-1}h(y,n)$ may be taken to be positive (or empty). It follows that as $n$ varies there are
at most $(|\A|+1)^{k|w|}$ possibilities for the $|w|$-tuple of words labelling the $|w|$-tuple of edges 
in $L\times\{n\}$. If the $|w|$-tuple of words labelling $L\times\{n\}$ and $L\times\{n'\}$ coincide
for some $n<n'$, then we can delete $L\times [n,n']$ to obtain a map showing that $M-n'+n$ is in $\Lambda(w)$.
 So in particular, if $M$ is minimal then there is
no repetition, and hence $M<(|\A|+1)^{k|w|}$. 
\end{proof}
 
A recursive upper bound on the Dehn function of a finitely presented group leads in an
obvious way to a solution to the word problem.

\begin{corollary}\label{c:wp} 
If $G=\<\A\>$ admits $k$-Lipschitz contractions
and one can list the words in
the letters 
 $\A$ of length at most $2(k+1)$ that represent the identity in $G$,
then one can solve the word problem in $G$. 
\end{corollary}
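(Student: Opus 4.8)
The plan is to convert the isoperimetric estimate of Proposition~\ref{p:fp} into a decision procedure, in the familiar way that a recursive upper bound on the Dehn function yields a solution to the word problem. First I would note that the hypotheses make an explicit finite presentation of $G$ available: by Proposition~\ref{p:fp} we have $G=\<\A\mid R\>$, where $R$ is the set of all words over $\A$ of length at most $2(k+1)$ that represent the identity, and this set is listable by assumption; moreover, with respect to this very presentation, Proposition~\ref{p:fp} supplies the recursive bound $\area(w)\le f(|w|)$ with $f(n)=n\,(|\A|+1)^{kn}$.

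Given an arbitrary word $w\in\A^*$, the algorithm proceeds as follows. Compute $N:=f(|w|)$. If $w$ represents $1$ in $G$, then by definition of $\area$ the word $w$ is freely equal in $F(\A)$ to a product of at most $N$ conjugates $u_i^{-1}r_i^{\pm 1}u_i$ with $r_i\in R$ and $u_i\in F(\A)$. Such a product is recorded by a (reduced) van Kampen diagram with at most $N$ two-cells; since each two-cell has perimeter at most $2(k+1)$ and every vertex of the diagram can be joined to the basepoint by an edge path lying in the union of the boundary circuit with the boundaries of the two-cells, one may take each conjugating word $u_i$ to have length at most $2(k+1)N+|w|$. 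Hence there are only finitely many, and explicitly enumerable, candidate products. The algorithm lists all of them, freely reduces each, and compares the result with the freely reduced form of $w$: if some product is freely equal to $w$ it outputs ``$w=1$ in $G$'', and if the finite list is exhausted without a match it outputs ``$w\neq 1$ in $G$''. Correctness is immediate from the bound $\area(w)\le f(|w|)$.

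The one point needing care — and the only real obstacle — is the a~priori bound on the lengths of the conjugators $u_i$, which is what makes the search finite; this is the standard ``bounded area implies bounded diagram size'' argument sketched above, and it may alternatively be quoted as the well-known fact that a recursive upper bound on the Dehn function of a finitely presented group furnishes an algorithm for its word problem (cf.~\cite{mrb:bfs}). Everything else is routine enumeration.
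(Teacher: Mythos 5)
Your proof is correct and follows exactly the route the paper intends: the paper offers no proof beyond the remark that ``a recursive upper bound on the Dehn function of a finitely presented group leads in an obvious way to a solution to the word problem,'' and your argument is the standard instantiation of that remark, with the finite presentation and the bound $\area(w)\le |w|\,(|\A|+1)^{k|w|}$ both supplied by Proposition~\ref{p:fp}. Your care over the a~priori bound on conjugator lengths (equivalently, on the diameter of a van Kampen diagram with boundedly many faces) is precisely the point the paper leaves implicit, and it is handled correctly.
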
     
 
\begin{thm}\label{real}\label{t:real}
There exists an algorithm that takes as input the following data:
\begin{enumerate}\setcounter{enumi}{-1} 
\item a positive integer $d$; 
\item  a finite set of generators $\A$ for a group $G$;
\item a constant $k$ such that  $G$  admits $k$-Lipschitz contractions;
\item a list of the words in the letters $\A$ that are of length at most $2(k+1)$  and 
that equal $1$ in $G$;
\end{enumerate}
and which constructs as output a finite connected semi-combinatorial cell complex $K$ with $\pi_1K\cong G$ and 
$\pi_iK=0$ for $2\le i\le d$.
\end{thm}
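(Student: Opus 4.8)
The plan is to construct, skeleton by skeleton, the universal cover $\Kt$ of the desired complex as an increasing union $\Kt^{(1)}\subseteq\Kt^{(2)}\subseteq\cdots\subseteq\Kt^{(d+1)}$ of free, cocompact $G$-CW-complexes with $\Kt^{(n)}$ $(n-1)$-connected, and then to put $K=\Kt^{(d+1)}/G$. Take $\Kt^{(1)}$ to be the Cayley graph $\mathcal C_\A(G)$ and $\Kt^{(2)}$ the complex obtained by attaching a $2$-cell along every loop of length at most $2(k+1)$; this is simply connected by Proposition \ref{p:fp}. Inputs (2) and (3) are exactly the hypotheses of Corollary \ref{c:wp}, so the word problem in $G$ is solvable, and hence $\Kt^{(2)}$ and all of its successors are explicitly computable objects: a cell of $\Kt^{(n)}$ is recorded by a word naming a group element together with a cell of the finite complex $K^{(n)}$, and two such records can be compared. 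The inductive step passes from $\Kt^{(n)}$ to $\Kt^{(n+1)}$ by adjoining one $(n+1)$-cell for each $G$-orbit of a finite family of attaching maps; each such map is a fixed subdivision of $\bdy([0,1]^{n+1})$ of a restricted type and of combinatorial size at most an explicit constant $B_{n+1}$, followed by a semi-combinatorial map into $\Kt^{(n)}$. Since $\Kt^{(n)}$ is cocompact, the image of such a map lies in a ball of bounded radius about any one of its vertices, so there are finitely many $G$-orbits, and solvability of the word problem lets us list them.

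The crux is to verify that this adjunction kills $\pi_n$, for which one generalizes the prism argument in the proof of Proposition \ref{p:fp} to dimension $n$. Given $g\colon S^n\to\Kt^{(n)}$, after a homotopy and a subdivision of the domain we may take $g$ to be combinatorial; let $V$ be the vertex set of its image and let $H_V\colon V\times\N\to G=(\Kt^{(n)})^{(0)}$ be a $k$-Lipschitz contraction of $V$, constant at time $0$ and equal to the identity at time $N$. Cellulate $S^n\times[0,N]$ in the product fashion and put $(v,i)\mapsto H_V(g(v),i)$ on vertices, then extend over cells in order of increasing dimension. A ``horizontal'' cell $\sigma\times\{i\}$, or a vertical cell $\tau\times[i,i+1]$ of dimension below $n$, has boundary a combinatorial sphere whose size is bounded in terms of the single cell $\sigma$ of $S^n$ and the constant $k$, hence independently of $g$; such a sphere is filled inside $\Kt^{(n)}$ by the explicit, computably bounded filling procedure that the induction makes available from the lower skeleta. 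The remaining cells are the top prism cells $\sigma\times[i,i+1]$ with $\dim\sigma=n$, and the boundary of each is a combinatorial $n$-sphere of size at most $B_{n+1}$, hence the attaching sphere, up to the $G$-action, of one of the $(n+1)$-cells just adjoined. So $g$ extends over all of $S^n\times[0,N]$ into $\Kt^{(n+1)}$; as the face $S^n\times\{0\}$ is constant and $S^n\times\{N\}$ reproduces $g$, this is a null-homotopy of $g$. Adjoining $(n+1)$-cells leaves $\pi_i$ unchanged for $i\le n-1$, so $\Kt^{(n+1)}$ is $n$-connected; moreover the same argument yields the explicit, computably bounded procedure for filling combinatorial $n$-spheres in $\Kt^{(n)}$ by $(n+1)$-cells of $\Kt^{(n+1)}$ that is needed to run the next stage. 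Iterating up to $n=d$ and passing to the quotient gives a finite, connected complex $K$ with $\pi_1K\cong G$ and $\pi_iK=0$ for $2\le i\le d$.

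The main obstacle, I expect, is keeping the combinatorial complexity of everything that appears under control. The contractions $H_V$ are geometric rather than combinatorial, so moving a cell to ``time $i$'' stretches each of its edges into a path of length up to $k$ and each of its two-dimensional faces into a filling disk of bounded area; iterated across the dimensions of $S^n$, this distortion compounds, and the bookkeeping must be arranged — this is the purpose of the restricted subdivisions of cube boundaries and of the semi-combinatorial attaching maps — so that (i) an arbitrary singular sphere can be normalised into this form, (ii) every horizontal and vertical face that occurs in the contraction is again a sphere of the permitted type and of the right size, and (iii) the recursion $B_n\mapsto B_{n+1}$, although it grows very fast, yields finite and effectively computable values for all $n\le d+1$. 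The only extra ingredient is the repeated use, at every dimension, of the Proposition \ref{p:fp}-style isoperimetric estimate to bound the fillings that the contraction calls for.
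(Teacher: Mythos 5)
Your overall strategy --- induct on skeleta, attach $(n+1)$-cells along a finite, computable list of attaching maps of bounded combinatorial type, and kill $\pi_n$ by running the prism argument of Proposition \ref{p:fp} over a $k$-Lipschitz contraction of the vertex set of a sphere --- is the same as the paper's. But the step you defer as ``the main obstacle'' is not a loose end to be tidied afterwards; it is the content of the theorem, and as written your induction does not close. The difficulty is this: in your prism $S^n\times[0,N]$, the horizontal cells $\sigma\times\{i\}$ and the vertical cells of dimension below $n+1$ are not single cells of $\Kt^{(n)}$ --- they are spheres that you propose to fill using ``the explicit, computably bounded filling procedure that the induction makes available''. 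Those fillings are disks (subcomplexes), not cells, and the boundary of a top prism cell $\sigma\times[i,i+1]$ is assembled from them. For that boundary to coincide with the attaching sphere of one of the $(n+1)$-cells you adjoined, your catalogue of attaching maps must be closed under this assembly, and $B_{n+1}$ must dominate the size of every disk your filling procedure can output --- a quantity you have not controlled and which a priori depends on the $n$-dimensional filling function, not just on $k$ and $B_n$. You correctly list the three conditions (i)--(iii) that the catalogue must satisfy, but you do not exhibit a catalogue satisfying them.

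The paper closes exactly this gap with a specific combinatorial device. Cells are cubes $\D^m$ whose directed edges carry labels: words in $\A$ of length at most $k^{m-1}$ for the ``vital'' cells and at most $k^d$ for the larger class of ``inflated'' cells, subject to the condition that the labels around every $2$-face multiply to $1$ in $G$ (``sensible'' labellings, enumerable via Corollary \ref{c:wp}). The payoff is that a prism $e\times[t,t+1]$ over a vital cell $e$, labelled via the contraction $H_S$ as in Proposition \ref{p:contract}, is again a \emph{single} sensible labelled cube --- its labels grow by exactly a factor of $k$ --- and hence is already one of the inflated cells of the next skeleton; no recursive filling of side faces occurs, and the size recursion is multiplication by $k$ rather than a tower of filling functions. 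The price is that the contraction argument applies only to spheres lying in the vital subcomplex $K_v^{(n)}$, so a second family of cells (the ``translation cells'', built inductively using the Addendum) is required to deformation-retract $K_I^{(n)}\cup T_{n+1}$ onto $K_v^{(n)}$, i.e.\ to normalise an arbitrary sphere into the domain where the contraction works --- this is your condition (i). Until you either adopt such a scheme or verify your conditions (i)--(iii) for some explicit catalogue, the proof is incomplete at its central point.
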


\begin{corollary} There exists an algorithm that, given an integer $d$ and a
finite presentation of an automatic group $G$, will construct
an explicit model for the compact $d$-skeleton of a $K(G,1).$
\end{corollary}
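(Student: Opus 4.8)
The plan is to deduce this directly from Theorem~\ref{real}; the only work is to show that its four pieces of input data (0)--(3) can be assembled algorithmically from a finite presentation $\langle\A_0\mid\R_0\rangle$ of an automatic group $G$ together with the given integer $d$. Item~(0) is $d$ itself (when $d\le 1$ there is essentially nothing to prove, since a presentation $2$-complex already supplies the $0$- and $1$-skeleta of a $K(G,1)$). For item~(1), I would pass to the symmetrized alphabet $\A=\A_0\cup\A_0^{-1}$, so that $\A=\A^{-1}$ as demanded throughout Section~\ref{s:KG1}.

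For item~(2): run the algorithm of \cite[Chapter~5]{E+} on $\langle\A_0\mid\R_0\rangle$. Since $G$ is automatic this terminates and returns an automatic structure over $\A$ --- a word-acceptor with accepted language $\L$, multiplier automata, and an equality checker. From the geometry of these automata one extracts a fellow-traveller constant $k$ for the combing $\{\sigma_g\}$ determined by $\L$, exactly as in \cite[Lemma~2.3.2]{E+} (the automatic counterpart of Proposition~\ref{find}, and the case already flagged in the discussion preceding Theorem~\ref{real}). As observed immediately after the definition of $k$-Lipschitz contractions, putting $H_S(s,n)=\sigma_s(n)$ then shows that $G$ admits $k$-Lipschitz contractions, so this very constant $k$ is a legitimate input~(2). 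For item~(3): the automatic structure solves the word problem in $G$, so one lists exactly the words over $\A$ of length at most $2(k+1)$ that equal $1$ in $G$ --- for instance by running Lemma~\ref{list} and halting once the listed words exceed length $2(k+1)$.

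Feeding $(d,\A,k,\text{this list})$ to the algorithm of Theorem~\ref{real} produces a finite, connected, semi-combinatorial --- hence explicit and compact --- cell complex $K$ with $\pi_1K\cong G$ and $\pi_iK=0$ for $2\le i\le d$. Its $d$-skeleton $K^{(d)}$ is the required model: one has $\pi_1K^{(d)}\cong G$ and $\pi_iK^{(d)}=0$ for $2\le i\le d-1$, so a $K(G,1)$ can be built from $K^{(d)}$ by attaching cells of dimension $\ge d+1$ --- first to kill $\pi_d$, then $\pi_{d+1}$, and so on --- and no such attachment alters the $d$-skeleton, so $K^{(d)}$ really is the $d$-skeleton of a $K(G,1)$. (Equivalently, one can keep $K$ itself, which is a compact $(d+1)$-skeleton of a $K(G,1)$ in the sense of Theorem~B.) I expect no genuine obstacle here beyond Theorem~\ref{real} itself; within the reduction, the only points that call for a little care are that the automatic structure returned by \cite[Chapter~5]{E+} be recorded over the alphabet $\A$ (so that the extracted $k$ is truly a fellow-traveller constant for $G=\langle\A\rangle$ and the word-list in~(3) is over the right alphabet), and the purely clerical matter of reconciling the phrase ``$d$-skeleton of a $K(G,1)$'' with the vanishing range $2\le i\le d$ in the conclusion of Theorem~\ref{real}.
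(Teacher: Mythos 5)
Your proposal is correct and follows essentially the same route as the paper: run the Epstein et al.\ algorithm to obtain an automatic structure, extract a fellow-traveller constant $k$ (whence $k$-Lipschitz contractions via $H_S(s,n)=\sigma_s(n)$), use the resulting word-problem solution to list the words of length at most $2(k+1)$ equal to $1$, and feed all of this to Theorem~\ref{real}. The extra remarks about symmetrizing the alphabet and reconciling the $d$-skeleton with the vanishing range are harmless clarifications of points the paper leaves implicit.
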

 
\begin{proof}[Proof of Corollary] One implements the algorithm in Section 5.2 of
\cite{E+} to find the automatic
structure. This gives both an explicit fellow-traveller constant $k$ and a 
solution to the word problem. One uses the solution to the word problem
to list the words of length at most at most $2(k+1)$ in the letters $\A$
that equal $1$ in $G$.
Theorem \ref{real} then applies (cf. Proposition \ref{find} and
Lemma \ref{list}).
\end{proof}

\subsection{Template of the construction for $K$}\label{ss:doK}
Let $G$ and $d$ be as in Theorem \ref{real}.
The complex $K$ will have {\emf{vital $n$-cells}}, the larger collection
of {\emf{inflated $n$-cells}}, and {\emf{translation cells}}. The first
two types of cells form nested subcomplexes
$$K^{(0)}\subset K^{(1)}_v\subset K_I^{(1)}
 \subset K^{(2)}_v\subset K_I^{(2)}
 \subset \cdots
\subset K^{(d+1)}_v\subset K_I^{(d+1)}
$$
The translation cells up to dimension $n$  
form a subcomplex $T_n$,
and $K^{(n)}=K_I^{(n)}\cup T_n$. By definition $K=K^{(d+1)}$.
Let $p:\tilde K\to K$ be the universal covering.

The key properties of the construction are that, for each $n\le d$ :

\begin{enumerate}
\item[(i)] Each finite subcomplex of $p^{-1}(K_v^{(n)})\subset\tilde K$ 
is contractible in $p^{-1}(K_I^{(n+1)})$;
\item[(ii)] $K_I^{(n)}\cup T_{n+1}$ strong deformation retracts to $K_v^{(n)}$.
\end{enumerate}

The following property plays an important role in an induction on dimension
that we use to define  translation cells.

\begin{enumerate}
\item[(iii)] There is an algorithm that, given a finite subcomplex as in (i),
will construct an explicit contraction of it.
\end{enumerate}

The complex $K$ will have fundamental group $G$ and 
the construction will be entirely algorithmic. We claim that Theorem \ref{t:real} follows. Indeed, given a map of an $n$-sphere into $K$, with $2\le n\le d$, by simplicial approximation we may assume that the image lies in $K^{(n)}\subset K_I^{(n)}\cup T_{n +1}$, which contracts to $K_v^{(n)}$, and 
by (i) any $n$-sphere in $K_v^{(n)}$ is contractible in $K_I^{(n+1)}$.


\subsection{Sensible labellings}

We have a fixed generating set $\A$ for $G$.

We take a set of symbols in bijection with the 
freely reduced words
in the free group on $\A$ that have length at most $k^r$ (including the empty word);
we call these {\emf{magnitude $k^r$ labels}}. For $r'>r$ we make the
obvious  identification of the magnitude $k^r$ labels with the corresponding
subset of the magnitude $k^{r'}$ labels. These labels will be attached to the
oriented edges of the 1-skeleton of our complex, so that 
if a directed edge $e$ has label $w\neq\emptyset$, then\footnote{ $\bar{e}$ is  the edge $e$ with reversed orientation.}
$\bar{e}$ has label $w^{-1}$.
A labelling of the edges around the
boundary of a square 
is said to be {\emf{sensible}}
if the product of labels, read with consistent orientation, is equal to the
identity in our group $G$. (Here we evaluate the label as the corresponding product
of generators $a\in \A$, of course.) A {\emf{sensible labelling}} of magnitude $k^r$
on  $\D^n$ (the $n$-cube) is a labelling of its directed 1-cells by labels of  magnitude $k^r$ that is sensible on
each 2-dimensional face. Note that the restriction of a labelling of any magnitude to any
face of $\D^n$ is a sensible labelling of the same magnitude.

Two labellings of $\D^n$ are said to be \emf{equivalent}
if one is carried to the other by a symmetry of $\D^n$.

We highlight a trivial but important observation which explains why we articulated Corollary \ref{c:wp}. 

\begin{lem} If the word problem is solvable in $G$ then there is an
algorithm that, given $d\in\N$, will list the finitely many (equivalence classes of) sensible labellings of any given magnitude
for cubes up to dimension $d$
(and then halt).\qed
\end{lem}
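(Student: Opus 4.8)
The plan is to observe that for a fixed magnitude $k^r$ and a fixed dimension $n \le d$, there are only finitely many ways to label the directed $1$-cells of $\D^n$ by magnitude $k^r$ labels, since both the set of directed $1$-cells of $\D^n$ and the set of magnitude $k^r$ labels are finite. So the algorithm will simply enumerate all such labellings; for each one it must decide which are \emph{sensible}, i.e.\ which have the property that on every $2$-dimensional face the product of labels read with consistent orientation evaluates to the identity of $G$. Since $\D^n$ has only finitely many $2$-faces, and on each face the candidate product is an explicit word in $\A^*$ (obtained by concatenating the magnitude $k^r$ labels, each of which is a bounded-length word in the free group on $\A$), sensibility on a given face is a single instance of the word problem in $G$. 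Running the solution to the word problem on each of the finitely many faces of each of the finitely many candidate labellings decides sensibility in all cases, after which the algorithm records one representative from each equivalence class under the (finite) symmetry group of $\D^n$ and halts.

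More precisely, I would carry out the steps in the following order. First, fix $d \in \N$ and the magnitude $k^r$; enumerate the finite set $\Sigma$ of magnitude $k^r$ labels (symbols in bijection with freely reduced words of length at most $k^r$ in the free group on $\A$, together with the empty word), and the finite set $E_n$ of directed $1$-cells of $\D^n$ for each $n \le d$. Second, enumerate all functions $\ell \colon E_n \to \Sigma$ satisfying the compatibility condition $\ell(\bar e) = \ell(e)^{-1}$ (this is the only global constraint not coming from sensibility, and it is decidable by inspection since inversion of labels is a formal operation on freely reduced words). Third, for each surviving $\ell$ and each $2$-face $F$ of $\D^n$, form the word $w_F \in \A^*$ obtained by reading the labels around $\bdy F$ in a consistent orientation and substituting the corresponding products of generators, then test whether $w_F = 1$ in $G$ using the word-problem solver (which is available by hypothesis); declare $\ell$ sensible if and only if $w_F = 1$ for every $2$-face $F$. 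Fourth, among the sensible labellings, quotient by the action of the symmetry group of $\D^n$ — again a finite computation — to produce a list of equivalence-class representatives, and then halt.

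The main thing to check, rather than any genuine obstacle, is that each of the finitely many ingredients is genuinely finite and genuinely computable. The finiteness of $\Sigma$ uses that $k^r$ is a fixed integer and $\A$ is finite; the finiteness of $E_n$ and of the symmetry group of $\D^n$ is combinatorial; and the word-problem calls terminate because a solution to the word problem is assumed in the hypothesis of the lemma (which, as the preceding text notes, is exactly why Corollary \ref{c:wp} was articulated — in the application $G$ is combable with a known Lipschitz constant and a known list of short relators, so Corollary \ref{c:wp} supplies the word-problem solver). Since all loops are finite and each loop involves only finitely many word-problem instances, the whole procedure terminates, which is what makes it legitimate to append ``(and then halt)'' to the statement. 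There is nothing subtle here beyond bookkeeping, which is presumably why the authors assigned the lemma a \qed in place of a written proof.
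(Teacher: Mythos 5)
Your proposal is correct and is exactly the bookkeeping argument the authors intend, which is why the lemma carries a \qed with no written proof: finitely many labels of a fixed magnitude, finitely many directed $1$-cells and $2$-faces of $\D^n$ for each $n\le d$, one word-problem instance per face per candidate labelling, and a finite symmetry group to quotient by. There is nothing to add or correct.
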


\subsection{The complex $K$}\label{s:homotopies}

There is a single $0$-cell in $K$.

The 1-cells are in bijection with and are labelled
by reduced word over $\A$ of length
at most $k^d$ (including the empty word $\emptyset$). 
Choose (arbitrarily) an orientation on each edge.
Denote by $e_w$ the edge labelled by $w\in \A^*$.
Identify the edges $\overline{e}_w$ and $e_{w^{-1}}$
(where $w^{-1}$ is the inverse of $w$ in the free group on $A$, and 
$\overline{e}$ is the edge $e$ with opposite orientation).
Those edges labelled by words of length at most one will be called \emf{vital 1-cells}.

There is one 2-cell for each equivalence
class of labelled squares as in Figure \ref{fig:square}.
\begin{figure}[htb]
\footnotesize
\psfrag{0}{$\emptyset$}
\psfrag{a}{$a$}
\psfrag{b}{$b$}
\psfrag{c}{$c$}
\psfrag{d}{$d$}
\psfrag{0}{$\emptyset$}
\psfrag{*}{$*$}
\begin{center}
\includegraphics{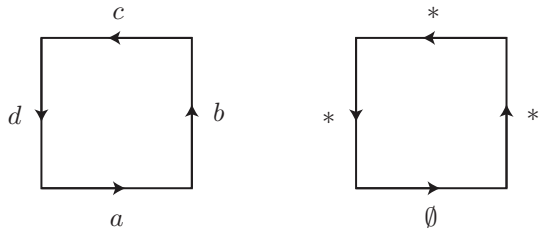}
\end{center}
\caption{\label{fig:square} 
Labelled squares. Edges are labelled by words
$a,b,c,d\in\{ w\in \A^*\| |w|\le k^d \}$ satisfying $abcd=1$ in $G$.
The square on the right maps via semi-combinatorial gluing maps,
the edges labelled $*$ mapping to the 0-cell of $K$.
}
\end{figure}

%
%
%
%

The complex $K_I^{(n)}$ is defined to
have one $n$-cell for each equivalence class of labellings of $\D^n$ of magnitude $k^d$. 
In addition $K_I$ has one {\emf{degenerate}} $n$-cell for each
equivalence class of  labellings of the 1-cells of $\D^n$ by elements of the set $\{\emptyset, *\}$, where  
at least one 1-cell of $\D^n$
is labelled by $\emptyset$.
The attaching map of a degenerate cell 
sends the directed 1-cells labelled $\emptyset$ to the directed 1-cell of $K$ labelled $\emptyset$, it  collapses the 1-cells labelled $*$ to the 0-cell of $K$, and it collapses any $k$-dimensional face whose entire 1-skeleton is labelled $*$.
The cells in $K_I^{(n)}$ are called {\emf{inflated}}.

For each positive integer $n\le d+1$,  the subcomplex $K_v^{(n)}\subset K_I^{(n)}$
consists of the $i$-cells, with $i\le n$, that have labellings of magnitude $k^{i-1}$
together with the  degenerate $i$-cells. The cells in $K_v^{(n)}$ are called {\emf{vital}}.
  
All we need to know about the translation cells for the moment is that there will be no
translation cells of dimension less than 2 and the translation 2-cells have attaching maps given by sensible labels (see Figure \ref{f:trans2}).

\begin{remark}\label{r:need-d} A noteworthy feature of the above construction is that it depends heavily on the integer $d$ fixed at the beginning of the
procedure. Since one knows that the $n$-skeleton of a complex $K$ with $d$-connected universal cover can serve as the $n$-skeleton of a classifying space
for $\pi_1K$, one would prefer an algorithmic construction of $K(G,1)^{(d)}$ that avoids this dependence. But the dependence on $d$
is difficult to avoid in an explicit construction. It
emerges from the fact that during a $k$-Lipschitz contraction, the diameter of the 1-skeleton of any $n$-cell can expand by a factor of $k$: crudely speaking, this means that one has to have 2-cells whose attaching maps cover all possibilities up to scale $k$ (cf.~Proposition \ref{p:fp}); one then has to contract the 2-skeleton within the 3-skeleton of the universal cover, and the
natural construction of this contraction requires 3-cells whose attaching maps are larger (in an appropriate sense) by a further factor of $k$, and so on.
\end{remark} 

\begin{lem}\label{l:pi1G} The maps $K_v\hookrightarrow K_I\hookrightarrow K$ induce
isomorphisms of fundamental groups, and $\pi_1K\cong G$.
\end{lem}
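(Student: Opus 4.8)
The plan is to compute $\pi_1 K$ directly from the cellular structure and then identify the inclusions $K_v\hookrightarrow K_I\hookrightarrow K$ as $\pi_1$-isomorphisms. First I would observe that since there is a single $0$-cell, $\pi_1 K_v^{(1)}$ is the free group on the (non-degenerate) $1$-cells of $K$, i.e.\ the edges $e_w$ with $|w|\le 1$, which after the identification $\bar e_w = e_{w^{-1}}$ is just the free group $F(\A)$ on the generating set $\A$. The subcomplex $K_v^{(2)}$ is obtained by attaching the vital $2$-cells, which have attaching maps given by sensible square labellings of magnitude $k^{1}$ on the boundary of $\D^2$; reading around such a square one gets a word in $\A^*$ that equals $1$ in $G$, with total length at most $4(k+1)$ (four edges of magnitude-$k^1$ labels, though the relevant bound is really words of length at most $2(k+1)$ as in Proposition \ref{p:fp}). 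The key point, which I would make precise by appealing to Proposition \ref{p:fp}, is that $R$ — all words of length at most $2(k+1)$ equalling $1$ in $G$ — is a defining set of relators for $G$, and every such word occurs (up to the square symmetries and the degenerate cases with empty-word edges) as the boundary label of some vital $2$-cell, so attaching the vital $2$-cells imposes exactly the relations in $R$ (the degenerate $2$-cells collapse to give no new relations, or only the trivial relation $\emptyset=1$). Hence $\pi_1 K_v^{(2)}\cong \langle \A\mid R\rangle = G$. Attaching cells of dimension $\ge 3$ does not change $\pi_1$, so $\pi_1 K_v \cong G$.

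Next I would handle the two inclusions. For $K_v\hookrightarrow K_I$: the extra $1$-cells of $K_I$ are the edges $e_w$ with $1<|w|\le k^d$, and the extra $2$-cells are the inflated (magnitude-$k^d$) squares and the degenerate squares involving $*$-labels. On $\pi_1$ at the $1$-skeleton level, adjoining the edge $e_w$ enlarges the free group, but each inflated square whose boundary reads (around three sides, say) a single letter-by-letter spelling $e_{a_1}\cdots e_{a_m}$ against the single edge $e_w$ with $w=a_1\cdots a_m$ provides a relation $e_w = e_{a_1}\cdots e_{a_m}$; thus in $\pi_1 K_I^{(2)}$ every new generator $e_w$ is expressed as a product of the vital generators, and any new relation coming from an inflated square of magnitude-$k^d$ labels, once rewritten using these expressions, becomes a word in $\A^*$ equal to $1$ in $G$, hence already a consequence of $R$. (The square on the right of Figure \ref{fig:square}, with $*$-edges mapping to the $0$-cell, and the degenerate cells, likewise contribute only already-valid relations.) So the inclusion induces a surjection on $\pi_1$ with the extra generators killed and no extra relations surviving, i.e.\ an isomorphism; since cells of dimension $\ge 3$ are irrelevant, $\pi_1 K_v \xrightarrow{\ \cong\ }\pi_1 K_I$. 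For $K_I\hookrightarrow K$: the complex $K$ differs from $K_I$ only by translation cells, and we are told there are no translation cells of dimension $<2$, while the translation $2$-cells have attaching maps given by sensible labellings — so reading around such a $2$-cell gives a word in $\A^*$ equal to $1$ in $G$ (after rewriting the magnitude labels in terms of the $e_a$), again a consequence of $R$. Thus no new generators and no new relations, and $\pi_1 K_I \xrightarrow{\ \cong\ }\pi_1 K$. Combining, $\pi_1 K \cong G$.

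The main obstacle I anticipate is the bookkeeping needed to verify that the $2$-cells (vital, inflated, degenerate, and translation) together impose \emph{exactly} the relators $R$ and nothing more: one must check both directions — that every relator of length $\le 2(k+1)$ is realized by some cell boundary (so the presentation is at least as strong as $\langle\A\mid R\rangle$), using Proposition \ref{p:fp} to know $R$ suffices for $G$; and that every cell boundary, after substituting the spelling relations $e_w = e_{a_1}\cdots e_{a_m}$, yields a word that is trivial in $G$ (so no \emph{extra} relations are introduced beyond consequences of $R$), which follows from the defining sensibility condition that boundary labels of $2$-faces evaluate to $1$ in $G$. The symmetry-equivalence and the degenerate/$*$-labelled cases need to be checked not to spoil either inclusion, but in each case the boundary word is either trivial in $F(\A)$ or already an element of (the normal closure of) $R$, so they contribute nothing new.
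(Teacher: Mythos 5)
Your proposal is correct and follows essentially the same route as the paper: identify the presentation complex of $\langle \A\mid R\rangle$ from Proposition \ref{p:fp} inside $K_v^{(2)}$, kill the extra $1$-cells $e_w$ using $2$-cells that express them in terms of the generators, and observe that the sensibility condition guarantees every further $2$-cell imposes only relations already valid in $G$. The one inaccuracy is your appeal to an inflated \emph{square} whose boundary spells $w=a_1\cdots a_m$ letter-by-letter against $e_w$ — no such square exists for $m>3$ since inflated $2$-cells have only four edges; the paper instead peels off one letter at a time via the square with boundary labels $(w_0,a,w^{-1},\emptyset)$ where $w=w_0a$, and inducts on $|w|$ (the all-at-once spelling cell you describe is the translation $2$-cell of Figure \ref{f:trans2}, which lives in $T_2\subset K$ rather than in $K_I$).
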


\begin{proof} The 2-skeleton of $K_v$ is obtained from that of the standard 2-complex $P$
of the presentation for $G$ given in Proposition \ref{p:fp} by adding an additional 1-cell
 labelled $\emptyset$, a 1-cell labelled by each word $w\in\A^*$ with $2\le |w|\le k^d$, and many extra 2-cells. The 1-cell labelled $\emptyset$ 
is null-homotopic in $K_I$ because we have the degenerate 2-cell with boundary label 
$(\ast,\ast,\ast,\emptyset)$. If $|w|\ge 2$, then $w\iso w_0a$ for some
$a\in\A$ and $|w_0|<|w|$. Thus the  edge labelled $w$ can be 
homotoped into the subcomplex with 1-cells labelled by words of
lesser length. An obvious induction on $|w|$ now implies that
$P\hookrightarrow K_v^{(2)}$
induces an epimorphism of fundamental groups. To see that this
is actually an isomorphism, 
 it suffices to note that the definition of {\em{sensible}}
is framed so that the additional 2-cells impose on the generators $\A$ only
relations that are valid in $G$.  The
 same considerations apply to  
 $P\hookrightarrow K_I^{(2)}\cup T_2=K^{(2)}.$  
 \end{proof}

With this lemma in hand, we can identify $G$ with the $0$-skeleton of $\tilde K$ and regard
the Cayley graph $C_\A(G)$ as a subcomplex of $\tilde K^{(1)}$. 
It is also justifies abbreviating  $p^{-1}(K_v^{(n)})$ to $\tilde K_v^{(n)}$ and
$p^{-1}(\tilde K_I^{(n)})$ to $\tilde K_I^{(n)}$.
We do so in the following proposition.

\begin{prop}\label{p:contract}
If $G$ admits $k$-Lipschitz contractions, then any finite subcomplex  $S\subset\tilde K_v^{(n)}$
is contractible in $\tilde K_I^{(n+1)}$.
\end{prop}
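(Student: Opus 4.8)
The plan is to mimic, at the level of the universal cover $\tilde K$, the van Kampen-diagram argument of Proposition \ref{p:fp}, but now carrying the homotopy in all dimensions up to $n+1$ rather than just dimension $2$. Given a finite subcomplex $S\subset\tilde K_v^{(n)}$, let $V\subset G$ be its (finite) vertex set, and let $H_V:V\times\mathbb N\to G$ be a $k$-Lipschitz contraction, with $N$ the least integer for which $H_V(\ast,N)=\mathrm{id}_V$. First I would form the "prism" $P=S\times[0,N]$, cellulated in the obvious product way (each $i$-cell of $S$ crossed with a $1$-cell of $[0,N]$ gives an $(i{+}1)$-cell, each $i$-cell crossed with a vertex gives an $i$-cell), so $\dim P\le n+1$. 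The map $H_V$ defines a map on $P^{(0)}$ by $\hat H(x,t)=H_V(\bar x,t)$ where $\bar x\in V$ is the vertex of $S$ carrying $x$; the $k$-Lipschitz condition guarantees $\rho(\hat H(x,t),\hat H(y,t))\le k$ whenever $x,y$ are endpoints of an edge of $S$, and $\rho(\hat H(x,t),\hat H(x,t{+}1))\le1$.

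The key step is to extend $\hat H$ over all of $P$ into $\tilde K_I^{(n+1)}$. One works skeleton by skeleton. Over $P^{(0)}$ we have $\hat H$. To extend over a $1$-cell of $P$: an edge of the form $\{x\}\times[t,t{+}1]$ maps to a word difference of length $\le1$, hence along an edge of $C_\A(G)\subset\tilde K^{(1)}$ (possibly constant); an edge of the form $e\times\{t\}$, with $e$ an edge of $S$ labelled by a word $u$ with $|u|\le1$ (since $S\subset\tilde K_v$, vital $1$-cells carry magnitude $k^0$ labels), maps to the path in $C_\A(G)$ labelled by a shortest positive word representing $\hat H(x,t)^{-1}\hat H(y,t)$, of length $\le k\cdot|u|\le k$ — and crucially such a word is itself a magnitude-$k^1$ label, so this path, though not a single edge of $\tilde K$, is a path in the $1$-skeleton. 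Inductively, to extend over an $i$-cell $\tau$ of $P$ with $i\le n+1$: the restriction of $\hat H$ to $\partial\tau$, composed with the labelling built so far, traces out a loop in $\tilde K^{(i-1)}$ whose combinatorial type is (up to subdivision) the boundary of a cube carrying a sensible labelling of magnitude $k^{i-1}$ — because lengths of edge-labels multiply by at most $k$ each time one passes from an $i$-cell of $S$ of magnitude $k^{i-1}$... here I would be careful: an $i$-cell of $S\subset\tilde K_v^{(n)}$ has a labelling of magnitude $k^{i-1}$, its image under the contraction has edge-labels of magnitude $k^i$, and the prism cell $\tau=\sigma\times[t,t{+}1]$ above it therefore acquires a sensible labelling of magnitude $k^i\le k^d$, which is exactly the magnitude carried by the $(i{+}1)$-cells of $\tilde K_I$. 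Hence there is an inflated $(i{+}1)$-cell of $K$ (equivalently, a lift in $\tilde K_I^{(i+1)}$) whose attaching map, after the relevant subdivision, is precisely this labelled cube, and we glue $\tau$ in along it. Degenerate cells absorb the edges that get labelled $\emptyset$ and the faces that collapse. Since $H_V(\ast,0)$ is constant, the face $S\times\{0\}$ maps to a point; since $H_V(\ast,N)=\mathrm{id}$, the face $S\times\{N\}$ maps by the identity into $S\subset\tilde K_v^{(n)}\subset\tilde K_I^{(n+1)}$. Thus $P$ furnishes a homotopy in $\tilde K_I^{(n+1)}$ from the inclusion $S\hookrightarrow\tilde K_I^{(n+1)}$ to a constant map, which is exactly the assertion that $S$ is contractible there.

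The main obstacle is the bookkeeping in the inductive extension step: one must verify that at every stage the loop in $\tilde K^{(i-1)}$ obtained by restricting the partially-built homotopy to the boundary of a prism cell really is (after an allowable subdivision of $\partial\D^{i}$) the boundary of a cube carrying a \emph{sensible} labelling of the correct magnitude $k^i$ — so that one of the inflated or degenerate $(i{+}1)$-cells of $K$ can legitimately be attached — and that these choices can be made compatibly on overlapping faces, so the extensions glue to a genuine cellular map $P\to\tilde K_I^{(n+1)}$. This is where the factor-of-$k$ growth per dimension (Remark \ref{r:need-d}) and the bound $i\le n+1\le d+1$ are used: the magnitude never exceeds $k^d$, so the needed cells were all included when $K$ was built. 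The verification that labels multiply correctly and that word-differences $\hat H(x,t)^{-1}\hat H(y,t)$ can always be written as positive words of the asserted length is routine, paralleling the corresponding passage in the proof of Proposition \ref{p:fp}; I would present it by induction on the skeleta of $P$ and otherwise suppress the diagram-chasing.
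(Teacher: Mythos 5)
Your proposal follows essentially the same route as the paper: form the prism $S\times[0,N]$, transport the $k$-Lipschitz contraction of the vertex set to a labelling of the prism's directed $1$-cells by shortest words representing the word-differences, observe that each product cell thereby acquires a sensible labelling of magnitude at most $k^{n+1}\le k^{d}$, and read off a map of the prism into $\tilde K_I^{(n+1)}$ using the inflated (and degenerate) cells. There are two places where your bookkeeping diverges from the actual construction of $K$. First, the worry about ``allowable subdivision'' is unnecessary: a prism edge whose word-difference is $w$ should be sent to the \emph{single} $1$-cell $e_w$ of $\tilde K$ --- this is precisely why $K$ was given a $1$-cell for every reduced word of length up to $k^d$ --- and not to a length-$|w|$ path in the Cayley graph; with that convention the attaching map of the corresponding inflated cell is literally the labelled cube and no subdivision of $\partial\D^{i}$ is required. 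Second, your terminal face $S\times\{0\}$ does not map to a point: since $H_V(\ast,0)$ is constant, its edges all acquire the label $\emptyset$, and in $K$ an $\emptyset$-labelled edge of a degenerate cell is sent to the $1$-cell $e_\emptyset$, not collapsed, so the homotopy you build ends at a non-constant map. The paper repairs exactly this by working with $S\times[-1,N]$ and giving the collar over $[-1,0]$ degenerate labellings with the $1$-cells over $S\times\{-1\}$ labelled $\ast$, so that the bottom face genuinely collapses to the basepoint. With those two adjustments your argument coincides with the paper's proof.
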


\begin{proof} We follow the proof of Proposition \ref{p:fp}.  Let $S_0$ be the vertex set of $S$,
let $H_{S_0}:S_0\times\mathbb N\to G=\tilde K^{(0)}$  be as in the definition of
$k$-Lipschitz contractions and let $N\in\N$ be the least integer such that $H_{S_0}(s,N)=s$
for all $s\in S_0$.
We cellulate  $Y=S\times [-1,N]$  in
the obvious manner: there are (horizontal) $m$-cells of the form $e\times [t,t+1]$, with
$e$ an $(m-1)$-cell of $S$, and (vertical) $m$-cells of the form $e'\times\{t\}$, with $e'$ an $m$-cell of $S$.
 The attaching maps of the cells
of $S$ determine the attaching maps of the cells in $Y$. 

We restrict $H_{S_0}$ to $S_0\times [0,N]$, then extend this to a map
$h_S:Y^{(0)}\to G=\tilde K^{(0)}$ by sending $S\times \{-1\}$ to the same vertex as $S\times \{0\}$.
If $E$ is an $m$-cell in $Y$ with attaching map $\phi_E:\D^m\to Y$ and $\varepsilon$
is a 1-cell of $\D^m$ whose endpoints map to $u$ and $v$, then we label $\varepsilon$ by
a shortest word in the letters $\A$ that equals 
$h_S(u)^{-1}h_S(v)$. The 1-cells mapping to $S_0\times [0,1)$ are labelled $\ast$ (where $\ast$ is the
special label introduced above for degenerate cells).

The two key points to observe are: first, since the attaching maps of the 
$m$-cells of $S$ send the 1-cells 
of   
$\D^m$
to edge-paths of length at most $k^n$, the labels we have assigned
to the 1-cells of   
cubical cells for
$Y$ 
are words of length at most $k^{n+1}$ (because
$H_{S_0}$ is a $k$-Lipschitz contraction); secondly, the labellings of  these cells of
$Y$ are
sensible, by construction. 

Since we added an $m$-cell to $K_I^{(m)}$ for each sensible labelling of $\D^m$ of magnitude
$k^{n+1}$, the labelling of the cells in $Y$ determines a natural map $Y\to K_I^{(n+1)}$ that extends
$h_S$, maps $(x,N)$ to $x$ for all $x\in S$, and is constant on $S\times\{-1\}$. Thus we
have constructed the desired contraction of $S$ in $K_I^{(n+1)}$.
\end{proof}
 
\subsection{Algorithmic construction of contractions and translation cells}

We follow  the second part of the proof of Proposition \ref{p:fp} 
to prove:

\begin{addendum} 
Given the data described in Theorem \ref{t:real}, there is an algorithm that will construct the contractions in Proposition \ref{p:contract}.
\end{addendum}

\begin{proof} Given $S$, one fixes a positive integer $N$ and tries to attach a label $h(u)\in G$
to each 0-cell in $S\times [-1,N]$, and a word $w_\e$ to the directed edges $\e$ of the domains of the
characteristic maps $\phi_E:\D^m\to S\times [-1,N]$ so that
\begin{itemize}
\item if $\phi(\e)$ joins $u$ to $v$ then $w_\e = h_S(u)^{-1}h_S(v)$ in $G$,

\item the labels $w_\e$ form a sensible labelling of magnitude $k^{n+1}$ on  $\D^m$
for each $m$-cell in $S\times [0,N]$,  

\item
the labelling of each cell mapping to $S\times\{N\}$ coincides with the labelling determining the characteristic map of the corresponding cell of $S\subset \tilde K^{(n)}$,

\item
the cells mapping to $S\times [-1,0)$ have  degenerate labellings, with the
1-cells mapping to  $S\times\{-1\}$ all labelled $\ast$.
\end{itemize}

In the preceding proposition we proved that for some $N$ such a choice of labels exists, so
one can algorithmically run all over all possible choices, picking labels arbitrarily
and using the solution to the word problem in $G$ (Corollary \ref{c:wp}) to check if the choices satisfy
the above conditions. (In fact, as in Proposition \ref{p:fp}, one has an {\em a priori} bound on $N$
that is an exponential function of the number of cells in $S$.)
\end{proof}
 
 We now define, inductively, the translation cells.

There is one translation 2-cell for each word $w\in F(\A)$ of length between 2 and $k^d$, that is, for each 1-cell in $K_I$ that is not a vital 1-cell.
Each translation 2-cell is of the form shown in Figure \ref{f:trans2} with attaching maps determined by the indicated (sensible) labelling.

\begin{figure}[htb]
\psfrag{0}{$\emptyset$}
\psfrag{1}{$a_1$}
\psfrag{2}{$a_2$}
\psfrag{n}{$a_n$}
\psfrag{d}{\ $\vdots$}
\psfrag{w}{$w$}
\begin{center}
\resizebox{4cm}{!}{\includegraphics{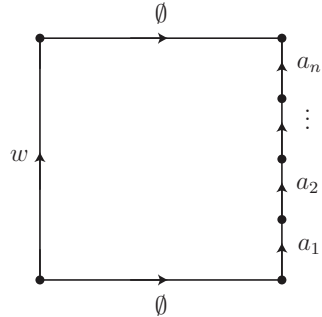}}
\end{center}
\caption{\label{f:trans2} 
A translation 2-cell. The labels on the edges are shown.
There is such a 2-cell for each word $w=a_1a_2\dots a_n\in F(\A)$ of length between 2 and $k^d$.
}
\end{figure}

Let $n\ge 1$ be an integer no greater than $d+1$.
There is one {\emf{translation cell}} of dimension $n+1$ for each $n$-cell of $K_I$ that is not vital.
 The attaching map of one codimension-1 face is the characteristic
map of the given inflated $n$-cell; write $\D^n = \D^{n-1}\times [0,1]$ and assume this 
is the face $\D^{n-1}\times\{0\}$. On the faces of the form $F\times [0,1]$ with $F<\D^{n-1}$
the attaching map is the translation cell corresponding to $F$ (which is well-defined, by induction).
We now have the attaching map defined on the boundary of $\D^{n-1}\times\{1\}$, yielding
a subcomplex of $K_v^{(n-1)}$ which is contractible in $K_I^{(n)}$
(again these are inductive assumptions). The addendum above yields an explicit contraction,
defined as a map from a cellulation of $\partial\D^{n-1}\times [0,1]$ to $K_I^{(n)}$. We identify this last complex
with a cylinder joining the boundary of  $\D^{n-1}\times\{1\}$ to a concentric $(n-1)$-cube
near the centre of  $\D^{n-1}\times\{1\}$, and we then complete the description of the
attaching map of our translation cell by sending the interior of this $(n-1)$ to the vertex
of $K_I$. Figure \ref{f:trans3} depicts the case of a translation 3-cell.

\begin{figure}[htb]
\footnotesize
\psfrag{0}{\tiny
$\emptyset$}
\psfrag{1}{$a_1$}
\psfrag{2}{$a_2$}
\psfrag{n}{$a_n$}
\psfrag{w}{$w$}
\psfrag{*}{\tiny$*$}
\begin{center}
\resizebox{6cm}{!}
{\includegraphics{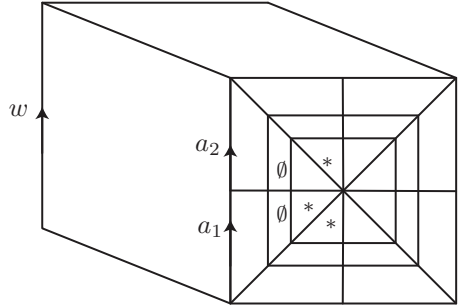}}
\end{center}
\caption{\label{f:trans3} 
A translation 3-cell. The back face is labelled by a single inflated 2-cell.
The side faces are translation 2-cells. The front face is subdivided into
conical wedges which are shown in Figure \ref{fig:conical}
}
\end{figure}

\begin{figure}[htb]
\footnotesize
\psfrag{0}{
$\emptyset$}
\psfrag{1}{$a_1$}
\psfrag{2}{$a_2$}
\psfrag{n}{$a_n$}
\psfrag{w}{$w$}
\psfrag{*}{$*$}
\begin{center}
\resizebox{0.9\textwidth}{!}
{\includegraphics{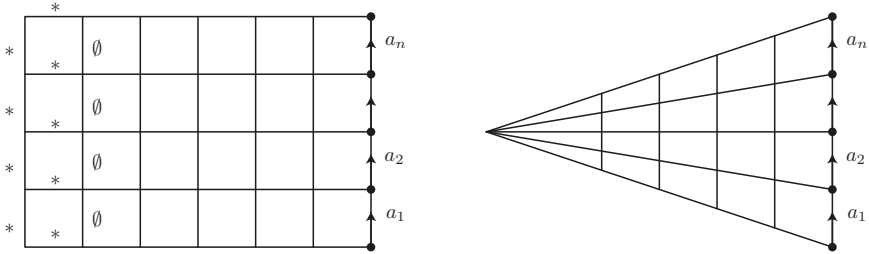}}
\end{center}
\caption{\label{fig:conical} 
The figure on the left shows a standard homotopy of  an edge path. The map from this to the complex $K$ factors through the conical quotient shown on the right.
}
\end{figure}

\begin{remark} It is important to note that this construction is entirely algorithmic.
\end{remark}
 
To complete the proof of Theorem \ref{t:real} it only remains to 
check that item (ii) of subsection 3.2 holds.

\begin{lem}\label{l:translate} 
$K_I^{(n)}\cup T_{n+1}$ strong deformation retracts to $K_v^{(n)}$. \end{lem}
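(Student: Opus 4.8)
The plan is to build the strong deformation retraction of $K_I^{(n)}\cup T_{n+1}$ onto $K_v^{(n)}$ one cell at a time, working upward by dimension, using the translation cells $T_{n+1}$ precisely as the ``mapping cylinders'' that push each inflated cell down onto its vital skeleton. The key point is that by definition $K_v^{(n)}$ contains every $i$-cell ($i\le n$) whose labelling has magnitude $k^{i-1}$, together with the degenerate cells, so the cells we must eliminate are exactly the inflated $i$-cells of magnitude $k^i$ (for $i\le n$) and --- via the translation $(i+1)$-cells --- the translation cells of $T_{n+1}$ themselves. For each non-vital inflated $i$-cell $\sigma$ there is, by construction, a translation $(i+1)$-cell $\tau_\sigma$ one of whose codimension-one faces is the characteristic map of $\sigma$, whose side faces are the translation cells of the faces of $\sigma$, and whose remaining ``front'' face maps entirely into $K_v^{(i-1)}$ (the concentric sub-cube being collapsed to the $0$-cell).

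First I would set up the retraction on $K_I^{(1)}\cup T_2$: every non-vital $1$-cell $e_w$ (with $2\le|w|\le k^d$) is the ``back edge'' of a translation $2$-cell whose other three edges are labelled $\emptyset,\emptyset,w$ with the two $\emptyset$-edges collapsible, so $e_w$ deformation-retracts across this translation $2$-cell onto the vital subcomplex. Concretely, one linearly retracts the translation $2$-cell onto the union of its three non-back edges, and these lie in $K_v^{(1)}$ together with the degenerate cells. This handles the base case. Then I would do the inductive step: assuming $K_I^{(n-1)}\cup T_n$ strong deformation retracts onto $K_v^{(n-1)}$, I would extend this over the non-vital inflated $n$-cells and the translation $(n+1)$-cells. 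For each non-vital inflated $n$-cell $\sigma$ the translation cell $\tau_\sigma$ provides a cellular homotopy $H_\sigma$ from the characteristic map of $\sigma$ to a map whose image lies in $K_v^{(n-1)}\cup(\text{collapsed cube})\subset K_v^{(n)}$; one pushes $\sigma$ across $\tau_\sigma$ following this homotopy. The side faces $F\times[0,1]$ of $\tau_\sigma$ are the translation cells of the faces $F<\partial\D^{n-1}$, so the homotopies on overlapping cells agree by the inductive compatibility built into the definition of translation cells (the side-face attaching map is literally the translation cell of $F$). This is what makes the retraction well-defined on the common boundary.

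The main obstacle I anticipate is verifying that these cell-by-cell retractions fit together into a single \emph{globally defined, continuous} strong deformation retraction --- in particular that the homotopy defined on an inflated cell $\sigma$ via $\tau_\sigma$ restricts correctly on each face of $\sigma$ to the homotopy already chosen for that face, and that the ``front face'' contraction supplied by the Addendum is compatible with the lower-dimensional retraction. The resolution is exactly the inductive clause in the construction of translation cells: the attaching map of $\tau_\sigma$ on a face $F\times[0,1]$ \emph{is} $\tau_F$, and the contraction of $\partial\D^{n-1}\times\{1\}$ used in defining $\tau_\sigma$ is the one produced by the Addendum, which by induction agrees with the retraction already built on $K_I^{(n-1)}\cup T_n$. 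One also needs that $K^{(n)}=K_I^{(n)}\cup T_n$ and $K_I^{(n)}\cup T_{n+1}$ differ only by the translation $(n+1)$-cells, whose free faces (the inflated $n$-cells) make each translation $(n+1)$-cell an elementary collapse onto the rest of its boundary. I would also check that vital cells and degenerate cells are fixed throughout (so it is genuinely a retraction onto $K_v^{(n)}$, not merely a homotopy equivalence): vital cells are never the ``back face'' of any translation cell, and degenerate cells are already in $K_v^{(n)}$ by definition. Finally I would note that continuity of the total homotopy follows from the CW-pair gluing lemma applied dimension by dimension, since at each stage we are extending a homotopy defined on a subcomplex over the relative cells using the explicit mapping-cylinder structure of the translation cells.
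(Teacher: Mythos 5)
Your proposal is correct and follows essentially the same route as the paper's (much terser) proof: each translation $(n+1)$-cell serves as the mapping cylinder pushing its non-vital inflated $n$-cell into $K_v^{(n)}$, and the homotopies for distinct cells agree on shared faces because the side-face attaching maps are, by the inductive definition, exactly the translation cells of those faces. Your additional checks (base case with translation $2$-cells, fixing of vital and degenerate cells, continuity via dimension-by-dimension gluing) are elaborations of details the paper leaves implicit rather than a different argument.
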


\begin{proof} We have constructed a translation $(n+1)$-cell $t$
for each non-vital $n$-cell $e$ in $K_I$. The translation cell
provides a homotopy pushing $e$ into $K_v^{(n)}$, and by construction the  homotopies for distinct cells agree on faces of
intersection. 
\end{proof}

\subsection{Proof of Theorems B and C}

Theorem \refthmB\ follows immediately from Theorem \ref{t:real}
and the fact
that the Epstein algorithm calculates a fellow-traveller
constant of the automatic group from
any presentation of that group (cf. Proposition 1.2).
Elementary algebra allows one to compute the  (co)homology
of the complex $K$ from its cellular chain complex,  and this equals
the (co)homology of $\pi_1K$ in dimensions up to $d$, so
Theorem \refthmBb\ is an immediate consequence of 
Theorem \refthmB.
\qed 

\section{Proof of Theorem A}

In Section \ref{s:reduce} we reduced Theorem A 
to the problem of enumerating the central
extensions of a fixed biautomatic group
 $Q$ by a given finitely generated abelian group $Z$.
 So in the light of Theorem \refthmB, the following
 proposition completes the proof.

\begin{prop}\label{extenumerate}
If one has an explicit finite model for the 3-skeleton of a
$K(G,1)$, then one can irredundantly enumerate the central
extensions of $G$ with given finitely generated kernel
$A$.
\end{prop}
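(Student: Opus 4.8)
The plan is to compute $H^2(G,A)$ explicitly from the given $3$-skeleton $K^{(3)}$ of a $K(G,1)$, and then to convert each cohomology class into an explicit presentation of the corresponding central extension. First I would extract from $K^{(3)}$ its cellular cochain complex with coefficients in $A$: since $K$ is a finite complex with a finite, explicitly given set of cells in each dimension up to $3$, the cochain groups $C^i(K;A)$ are finitely generated and the coboundary maps $\delta^i$ are explicit integer matrices (tensored with $A$), so $H^2(G,A)=\ker\delta^2/\operatorname{im}\delta^1$ is computable as a finitely generated abelian group, together with an explicit finite list of cocycles $z_1,\dots,z_m\in Z^2(K;A)$ whose classes exhaust $H^2(G,A)$ without repetition (using the solvability of the isomorphism and membership problems for finitely generated abelian groups to detect when two cocycles are cohomologous, i.e.\ when their difference lies in $\operatorname{im}\delta^1$).

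Next I would turn each cellular $2$-cocycle $z_j$ into a presentation of the extension $E_j$ it classifies. Fix the presentation $\langle \A\mid \R\rangle$ of $G$ read off from the $2$-skeleton of $K$ (the $1$-cells give generators, the $2$-cells give relators); then a central extension of $G$ by $A=\langle b_1,\dots,b_s\mid \beta_1,\dots,\beta_t\rangle$ determined by a cocycle is presented by $\langle \A,b_1,\dots,b_s \mid \beta_1,\dots,\beta_t,\ [a,b_i]\ (a\in\A,\,1\le i\le s),\ r\cdot c_r\ (r\in\R)\rangle$, where each $c_r$ is the word in the $b_i$ recording the value of $z_j$ on the $2$-cell corresponding to $r$ (equivalently, the value of the cocycle around that relator). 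This is the standard dictionary between $2$-cocycles on the presentation complex and central extensions; since $z_j$ is given explicitly as a function on the finitely many $2$-cells, the words $c_r$ are computable, so one obtains an explicit finite presentation of $E_j$ for each $j$.

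Finally I would argue that the resulting finite list $E_1,\dots,E_m$ is a complete irredundant enumeration of the central extensions of $G$ by $A$. Completeness and irredundancy as \emph{extensions} are immediate from the classification of central extensions by $H^2(G,A)$ quoted in Section~\ref{s:reduce}, together with the fact that equivalence classes of extensions correspond bijectively to $H^2(G,A)$ and that the $z_j$ were chosen to represent each class exactly once. I expect the main obstacle to be precisely the bookkeeping in the middle step: one must be careful that the cochain-level cocycle condition on $K^{(2)}$ really does yield a consistent assignment of the central words $c_r$ (so that $a\mapsto a,\ b_i\mapsto b_i$ genuinely defines an extension with the prescribed cocycle), and that passing from $K$'s cell structure — whose $2$-cells are the labelled squares and degenerate cells of Section~\ref{s:KG1}, not literally the relators of a one-relator-per-$2$-cell presentation — back to an honest group presentation is done correctly; Lemma~\ref{l:pi1G} is what licenses this reduction, since it identifies $\pi_1K$ with $G$ via the presentation $P$ of Proposition~\ref{p:fp}, and one can pull the cocycle back along $P\hookrightarrow K_v^{(2)}\hookrightarrow K^{(2)}$ to land on that presentation. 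With that care taken, the enumeration is algorithmic and the proposition follows.
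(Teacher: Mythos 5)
Your argument is essentially the paper's own proof: read a one-vertex presentation of $G$ off the $2$-skeleton, compute $H^2(G,A)$ from the explicit cellular cochain complex, and convert each representative cocycle $\sigma$ into the presentation $\langle S\cup X\mid Y,\ [x,s]=1,\ r_i=a_i\rangle$ with $a_i$ a word in $X$ representing $\sigma$ on the corresponding $2$-cell; the paper also records, as you anticipate, that the ambiguities in reading off the relators (starting corner, orientation) only change the presentation by Tietze moves. The one slip is your repeated claim that the list of cocycles and of extensions is \emph{finite}: $H^2(G,A)$ is a computable finitely generated abelian group but need not be finite (e.g.\ $H^2(\mathbb{Z}^2,\mathbb{Z})\cong\mathbb{Z}$), so what one gets is an irredundant recursive enumeration, possibly infinite --- which is all the proposition asserts and all that Lemma \ref{iso} requires.
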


\begin{proof}
Elementary algebra allows one to explicitly calculate  
cellular 2-cocycles representing the elements of $H^2(G,A)$. Each
such cocycle $\sigma$ is an assignment of elements of
$A$ to the (oriented) 2-cells of $K$. We may assume that $K$ has
only one vertex (contracting a maximal tree if necessary), in
which case the 2-skeleton of $K$ corresponds to a (finite)
presentation for $G$ in the sense that $G=\langle S \mid
r_1,\dots,r_m\rangle$, where the $s_i\in S$ are the oriented
1-cells (there is a chosen orientation, so that $S$ is in
bijection with the physical 1-cells) and the $r_j$ are the
attaching paths of the 2-cells ---  there is a choice of an
oriented starting corner; different choices would lead to $r$
being replaced by a cyclic permutation of itself or its (free)
inverse. We write $|r|$ for the oriented 2-cell with boundary
label $r$.

The extension of $G$ by $A$ corresponding to $[\sigma]\in H^2(G,A)$
is the group with presentation 
$$
\langle
S\cup X \mid Y;
r_i=a_i, i=1,\dots,m \text{ and }  [x,y]=1\ \forall\ x\in X,
y\in S\rangle,
$$
where $A=\langle X\mid Y\rangle$ and $a_i$ is a word in $X^{\pm
1}$ which equals $\sigma(|r_i|)$ in $A$. Thus, from the 3-skeleton
of the $K(G,1)$ one obtains  a collection of representatives for
the elements of $H^2(G,A)$, and from that an irredundant
enumeration of the central extensions of $G$ by $A$, in the form
of finite presentations.

Note that if we had chosen a different starting corner for $r_i$
(but kept the orientation the same) then in this presentation we
would instead of $r_i=a_i$ have $r_i^*=a_i$, where
$r_i^*=s_jr_is_j^{-1}$ (freely) for some  $s_j$. Thus the
presentation obtained via this change would differ from the one
above by obvious Tietze transformations exploiting the relations
$[x,s_j]=1$. And a change of choice of orientation for $|r_i|$
would simply replace $r_i=a_i$ by $r_i^{-1}=a_i^{-1}$, since
$\sigma(|r_i^{-1}|)=-\sigma(|r_i|)$.
\end{proof}

\section{Proof of Theorem D}

Recall that the translation number of an element $g$ of a
finitely generated group is
$$
\tau(g)=
\lim_{m\to\infty} \frac{1}{m}d(1, g^m),
$$
where $d$ is a fixed word metric on the group. The translation
functions $g\mapsto \tau(g)$ associated to different word metrics
are Lipschitz equivalent, hence the statement `$g$ has non-zero
translation length' is independent of generating set. Also, since
$d(1,x^{-1}g^mx)$ and $d(1,g^m)$ differ by at most $2\,d(1,x)$,
the number $\tau(g)$ depends only on the conjugacy class of $g$.
And if $g$ and $h$ commute, then $\tau(gh)\le\tau(g)+\tau(h)$.

\begin{lem}\label{finitecentre}
Let $G$ be a finitely generated group in which central elements of
infinite order have positive translation numbers and in which the
torsion subgroup of the centre is finite (e.g., a biautomatic
group). Then $G/Z(G)$ is a group with finite centre. Moreover, if
$Z(G)$ is torsion-free,
then the centre of $G/Z(G)$ is trivial.
\end{lem}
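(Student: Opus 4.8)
The plan is to work with a fixed word metric $d$ on $G$ and analyze the preimage in $G$ of a central element of $G/Z(G)$. Suppose $\bar{g}\in Z(G/Z(G))$; lift it to $g\in G$. The defining property of $\bar{g}$ being central is that $[g,h]\in Z(G)$ for every $h\in G$. First I would observe that the map $h\mapsto [g,h]$ descends to a homomorphism from $G$ (in fact from $G/Z(G)$, since $[g,\cdot]$ kills $Z(G)$) into the abelian group $Z(G)$; call this $\varphi_g$. If $Z(G)$ is torsion-free, the target is torsion-free abelian, so $\varphi_g$ factors through the abelianization of $G/Z(G)$ modulo torsion. The key point will be to show $\varphi_g$ is trivial, i.e.\ $g\in Z(G)$, which gives $\bar g=1$ and hence $Z(G/Z(G))$ trivial; in the general case the same argument will show $\varphi_g$ has image in the torsion subgroup of $Z(G)$, which is finite, so $Z(G/Z(G))$ maps to a finite group with trivial-by-finite kernel arguments bounding its size.

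The central step uses translation numbers. For any $h\in G$ and any $m$, we have $[g^m,h] = \varphi_g(h)^m$ up to the usual commutator identities: more precisely, since $[g,h]$ is central, $[g^m,h]=[g,h]^m=\varphi_g(h)^m$. Now $g^{-m}h^{-1}g^m = h^{-1}\varphi_g(h)^m$, so $d(1, h^{-1}\varphi_g(h)^m) = d(1, g^{-m}h^{-1}g^m) \le d(1,h^{-1}) + 2\,d(1,g^{-m})\cdot 0$... more carefully, $d(1, g^{-m}h^{-1}g^m)$ and $d(1,h^{-1})$ differ by at most $2\,d(1,g^m)$, which grows at most linearly in $m$ with slope $\tau(g)$. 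On the other hand $d(1,\varphi_g(h)^m) \le d(1,h) + d(1, h^{-1}\varphi_g(h)^m)$, so dividing by $m$ and letting $m\to\infty$ gives $\tau(\varphi_g(h)) \le \tau(g)$. This does not yet yield a contradiction, so I would instead iterate in the other variable: replace $h$ by $h^n$, giving $\varphi_g(h^n)=\varphi_g(h)^n$, so $n\,\tau(\varphi_g(h)) = \tau(\varphi_g(h)^n) \le \tau(g)$ for all $n$, forcing $\tau(\varphi_g(h))=0$. By hypothesis, a central element of infinite order has positive translation number, so $\varphi_g(h)$ has finite order. Thus $\varphi_g(h)$ lies in the torsion subgroup of $Z(G)$ for every $h$, i.e.\ $\varphi_g$ has image in a finite group.

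This finishes the torsion-free case immediately: if $Z(G)$ is torsion-free then $\varphi_g\equiv 1$, so $g$ centralizes $G$, so $\bar g=1$, whence $Z(G/Z(G))=1$. For the general statement, we have shown each $\bar g\in Z(G/Z(G))$ has the property that $[g,\cdot]$ takes values in the finite group $T:=$ (torsion subgroup of $Z(G)$). I would then package this as: $Z(G/Z(G))$ embeds in $\mathrm{Hom}(G/Z(G), T)$ via $\bar g\mapsto \varphi_g$ --- one checks this assignment is a homomorphism in $\bar g$ (it is, since $\varphi_{gg'}=\varphi_g\varphi_{g'}$ as $T$ is abelian and the $\varphi$'s commute with conjugation) and injective (if $\varphi_g\equiv 1$ then $g\in Z(G)$). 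Since $G/Z(G)$ is finitely generated and $T$ is finite, $\mathrm{Hom}(G/Z(G),T)$ is finite, so $Z(G/Z(G))$ is finite. The main obstacle I anticipate is getting the commutator bookkeeping exactly right --- verifying $[g^m,h]=[g,h]^m$ and $\varphi_g(h^n)=\varphi_g(h)^n$ when $[g,h]$ is central, and confirming that $\bar g\mapsto\varphi_g$ is a well-defined injective homomorphism independent of the lift $g$ --- but these are routine identities in a group modulo its centre rather than a conceptual difficulty. Biautomatic groups satisfy both hypotheses (central elements of infinite order have positive translation numbers by \cite{GS}-type arguments, and the centre is finitely generated with finite torsion part), so the parenthetical example is immediate.
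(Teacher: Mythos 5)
Your proposal is correct and follows essentially the same route as the paper: both arguments hinge on the identity $[g,h^n]=[g,h]^n$ for $[g,h]$ central, use conjugacy-invariance and subadditivity of $\tau$ to force $\tau([g,h])=0$ and hence $[g,h]\in T$, and then count (your embedding $Z(G/Z(G))\hookrightarrow\operatorname{Hom}(G/Z(G),T)$ is just a repackaging of the paper's observation that $\operatorname{ad}(a)$ has only finitely many possibilities on a finite generating set). The only cosmetic differences are your two-step iteration (first in $g^m$, then in $h^n$) where the paper gets $n\tau(z)\le 2\tau(a)$ in one line, and the harmless slip that your first estimate yields $\tau(\varphi_g(h))\le 2\tau(g)$ rather than $\tau(g)$.
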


\begin{proof}
Let $a$ be an element of $G$ that maps to a central element in
$G/Z(G)$. Then for all $g\in G$ we have $gag^{-1}=za$, where $z$
(which depends on $a$ and $g$) is central in $G$. But then
$g^nag^{-n}=z^na$ for all positive integers $n$. This implies that
the translation number of $z$ is zero, as for all $n\in\N$ we
have:
\begin{align*}
n\tau(z)&=\tau(z^n)=\tau(z^naa^{-1})\\
&\le \tau(z^na) +
 \tau(a^{-1})\text{ (since $z^na$ and $a^{-1}$ commute)}\\
&=\tau(g^nag^{-n})
+\tau(a) = 2\tau(a).
\end{align*}


Thus  for all $g\in G$, we have $a^{-1}ga = zg$, with $z$ a
torsion element of $Z(G)$. Since the torsion subgroup of $Z(G)$ is
finite and $G$ is finitely generated, it follows that there are
only finitely many possibilities for the inner automorphism
$\text{\rm{ad}(a)}\in\text{\rm{Inn}}(G) =G/Z(G)$. In other words,
$Z(G/Z(G))$ is finite.
\end{proof}

\begin{remark}
The finite presentations of biautomatic groups with finite centre,
and the finite presentations of biautomatic groups with
torsion-free centre, both form recursively enumerable classes.
\end{remark}

In the light of this lemma, and Theorem A, we obtain:

\begin{thmC}
The isomorphism problem among biautomatic groups is
solvable if and only if the isomorphism problem is solvable
among biautomatic groups with finite centre.
\end{thmC}

\begin{remark}
It also follows from Lemma \ref{finitecentre} that if one could
solve the isomorphism problem for biautomatic groups with trivial
centre, then one could solve the isomorphism problem for
torsion-free biautomatic groups. A similar reduction pertains for
biautomatic groups that are perfect, even in the presence  of
torsion.
\end{remark}

\section{Rational structures on abelian groups}

In Section \ref{s:1} we constructed an automatic structure for
the centre $Z(\Gamma)$, from this obtained a finite presentation
and then used the isomorphism problem for finitely generated
abelian groups to identify the isomorphism type of
$Z(\Gamma)$. In this section we present some results which
derive information directly from the automatic structure.

We remind the reader that a {\emf{rational structure}} for a 
group $G$ with finite semigroup generators $X$ is a regular
language $\L\subset X^*$ that maps bijectively to $G$
under the natural map $X^*\to G$.

\begin{lem}\label{growth}
Given a finitely generated abelian group $A$ and a rational
structure\footnote{no fellow-traveller
property is assumed}
 $\L\to A$, one can
decide the torsion free rank $\rho$ of $A$. Specifically, it is
the degree of growth of the regular language $\L$.
\end{lem}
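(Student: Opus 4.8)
The plan is to relate the growth of the language $\L$ to the torsion-free rank $\rho$ of $A$ via the geometry of $A$, and then to note that the degree of polynomial growth of a regular language is algorithmically computable from a finite-state automaton accepting it. First I would recall that a finitely generated abelian group $A$ splits as $\Z^\rho\oplus T$ with $T$ finite, and that with respect to any finite generating set the growth function $m\mapsto \#\{g\in A: |g|\le m\}$ of $A$ itself is a polynomial of degree exactly $\rho$ (this is the standard fact that $\Z^\rho$ has growth degree $\rho$, with $T$ contributing only a bounded multiplicative factor). Since $\L$ maps bijectively onto $A$, the number of elements of $A$ of word-length at most $m$ equals the number of words $w\in\L$ whose \emph{image} in $A$ has length at most $m$; the issue is that a word in $\L$ may be much longer than the length of the element it represents, so one cannot read $\rho$ off the word-length growth of $\L$ directly without an argument.

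The key step is therefore a two-sided comparison between word-length in $A$ and length of the representative word in $\L$. One direction is trivial: if $w\in\L$ has length $n$ then its image has word-length at most $n$, so the number of words of length $\le n$ in $\L$ is at least the number of group elements of length $\le n$, giving a lower bound of degree $\rho$ on the growth of $\L$. For the reverse inequality I would use that $\L$ is regular: an accepted word of length $n$ labels a path of length $n$ in the (finite) automaton, and if $n$ exceeds the number of states then the path contains a loop; pumping that loop produces infinitely many accepted words, all representing \emph{distinct} elements of $A$ (since $\mu|_\L$ is a bijection), which are therefore the images of a single coset direction — more precisely, the pumped portion represents a nontrivial, necessarily infinite-order element $v\in A$, and the pumped words $uv^{k}u'$ represent the distinct elements $g_0 + k\,[v]$. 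Decomposing a long word of $\L$ into its non-loop part plus pumped loops shows that its image lies within bounded word-distance of a point of a lattice spanned by at most $\rho$ loop-vectors; counting these lattice points of a given length gives that the number of words of $\L$ of length $\le n$ is at most a polynomial of degree $\rho$ in $n$. Combining the two bounds, the degree of growth of $\L$ equals $\rho$.

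Finally, I would invoke the classical fact (going back to work on rational series and the growth of regular languages — e.g.\ the results of Trofimov or of the theory of $\Z$-rational formal power series) that for a regular language the counting function $n\mapsto \#\{w\in\L: |w|\le n\}$ is, up to eventual periodicity, a quasi-polynomial whose degree can be computed effectively from any automaton accepting $\L$: one puts the transition matrix into a normal form and reads off the size of the largest ``cascade'' of strongly connected components, equivalently the order of the pole at $1$ of the generating function $\sum_{w\in\L} z^{|w|}$. Since the rational structure is given to us as an explicit regular language (hence an explicit automaton), this computation terminates, and the output is $\rho$.

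The main obstacle I expect is the reverse comparison in the second paragraph: making precise that the ``pumped loop'' portions of accepted words span a subgroup of rank at most $\rho$ and that the non-pumped remainder is bounded, so that the image of a length-$\le n$ word lies in a ball of radius $O(n)$ around an $O(n)$-ball of a rank-$\le\rho$ lattice — this is where regularity of $\L$ and the bijectivity of $\mu|_\L$ must be combined carefully, and it is the step that genuinely uses that $A$ is abelian (in a non-abelian group distinct pumped words could collapse or proliferate differently). Everything else — the growth of $\Z^\rho$, the effectivity of computing the growth degree of a regular language — is standard and can be cited.
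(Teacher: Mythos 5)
Your ``trivial direction'' has the inequality the wrong way round, and it is exactly the direction that carries the content of the lemma. From ``$w\in\L$ of length $n$ has image of word-length at most $n$'' what follows is that $\{w\in\L : |w|\le n\}$ maps \emph{injectively into} the ball $B(n)\subset A$, i.e.\ $\#\{w\in\L:|w|\le n\}\le \#B(n)=O(n^\rho)$ --- an \emph{upper} bound on the growth of $\L$. It does not follow that this set maps \emph{onto} $B(n)$: an element of word-length $\le n$ has a unique $\L$-representative, but nothing guarantees that representative has length $\le n$ (no fellow-traveller or quasi-geodesity hypothesis is available, as the footnote stresses). So your claimed lower bound of degree $\rho$ is unjustified. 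Your second paragraph then devotes the pumping argument to showing that the growth of $\L$ is \emph{at most} of degree $\rho$ --- but that is the direction that really was trivial (injectivity into $B(n)$ already gives it). The net effect is that the proposal establishes the upper bound twice and the lower bound not at all.

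The missing ingredient is to use \emph{surjectivity} of $\mu|_\L$ together with the structure of polynomial-growth regular languages. Since the growth of $\L$ has degree at most $\rho$, $\L$ is a finite union of basic languages $a_0l_1^*a_1\cdots l_n^*a_n$ with $n\le\rho$; injectivity forces each $\mu(l_i)$ to have infinite order, and (using that $A$ is abelian) the image of each basic piece lies in a bounded neighbourhood of a subgroup of rank at most $n\le\rho$. Because these finitely many neighbourhoods cover all of $A$, which has rank $\rho$, at least one piece must have $n=\rho$ independent $l_i$'s, and that single piece already contributes on the order of $m^{\rho}$ words of length at most $m$ (distinct as words because distinct as group elements). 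That is how the paper obtains the lower bound. Your pumping analysis is essentially this structural fact about $\L$ in disguise, so the repair is a redirection of your argument rather than a new idea; your final remarks on the effective computability of the growth degree of a regular language from an automaton are correct and supply a point the paper leaves implicit.
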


\begin{proof}
Since words of length at most $n$ map into the ball of radius $n$
in $A$, the growth of $\L$ is no larger than that of $A$. So $\L$
is a regular language with polynomial growth of degree at most
$\rho$. In particular it is a union of basic languages of the form
$a_0l_1^*a_1\cdots l_n^*a_n$, with $n\le\rho$. As
$\L\to A$ is injective,
 each of the words $l_i$ projects to an infinite order
element in $A$. Therefore, the image of each of the finitely many
basic sublanguages is contained in a bounded neighbourhood of a
subgroup $\Z^n\leq A$, where $n\le\rho$. Since this image is the
whole of $A$, at least one of these subgroups must actually have
rank $n=\rho$. Thus the degree of polynomial growth of $\L$ is
$\rho$.
\end{proof}

It is possible to have the same regular language mapping
bijectively to different abelian groups (of the same torsion free
rank). For example, let $\L\subseteq \{x^{\pm 1},y\}^*$ be the
language defined by $\L=x^{\pm }*\cup x^{\pm *}y$. Then $\L$ maps bijectively
onto $\Z\cong\<a\|-\>$ via $x^{\pm 1} \mapsto a^{\pm 2} $,
$y\mapsto a$, and also onto $\Z\times
C_2\cong\<b,c\|[b,c]=1=c^2\>$ via $x^{\pm 1} \mapsto b^{\pm 1} $,
$y\mapsto c$.

\smallskip
Continuing with the notation of Lemma \ref{growth}, we have
an abelian group $A$ with  rational structure $\L\to A$
where $\L$ is a union
$\L=\L_1\cup\cdots\cup\L_r$ of languages of the form
$\L_i=a_0l_1^*a_1\cdots l_n^*a_n$.

\begin{lem}
Each sublanguage $\L_i$ contains at most one element which projects
to a finite order element of $A$.
\end{lem}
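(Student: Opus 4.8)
The plan is to analyze the structure of a basic sublanguage $\L_i = a_0 l_1^* a_1 \cdots l_n^* a_n$ and show that the projection to $A$ restricted to $\L_i$, while not injective on all of $\A^*$, behaves almost like a map from $\Z^n$. First I would fix $i$ and write a typical element of $\L_i$ as $w(m_1,\dots,m_n) = a_0 l_1^{m_1} a_1 \cdots l_n^{m_n} a_n$ with $m_j \ge 0$. Since $A$ is abelian, the image of $w(m_1,\dots,m_n)$ in $A$ is $c + \sum_{j=1}^n m_j \bar{l_j}$, where $c = \overline{a_0 a_1 \cdots a_n}$ is a fixed element of $A$ and $\bar{l_j}$ denotes the image of $l_j$ in $A$. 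By Lemma \ref{growth} (and the proof of Lemma \ref{growth}) we already know each $\bar{l_j}$ has infinite order in $A$; I would like to conclude more, namely that the elements $\bar{l_1},\dots,\bar{l_n}$ are $\Z$-linearly independent.

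Second I would establish linear independence of $\bar{l_1},\dots,\bar{l_n}$. Suppose not; then there is a nontrivial integer relation $\sum_j \lambda_j \bar{l_j} = 0$. Rescaling, one obtains two distinct tuples $(m_j)$ and $(m'_j)$ of non-negative integers with $\sum m_j \bar{l_j} = \sum m'_j \bar{l_j}$, and hence $w(m_1,\dots,m_n)$ and $w(m'_1,\dots,m'_n)$ are two distinct words of $\L_i \subseteq \L$ with the same image in $A$ — contradicting injectivity of $\L \to A$. (One must be slightly careful that the two tuples give genuinely distinct words of $\L$, i.e.\ that the canonical forms $a_0 l_1^{m_1}\cdots$ differ as strings, but since the $l_j$ and $a_j$ are fixed strings and only the exponents change, distinct exponent tuples give distinct words as long as one chooses the $m_j, m'_j$ large enough and unequal in at least one coordinate; a short argument handles the degenerate possibility that some $l_j$ is the empty word, in which case that factor may simply be deleted from the description of $\L_i$.) So $\bar{l_1},\dots,\bar{l_n}$ span a free abelian subgroup of rank $n$.

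Third I would finish: an element $w(m_1,\dots,m_n)\in\L_i$ projects to a torsion element of $A$ iff $c + \sum_j m_j \bar{l_j}$ has finite order, i.e.\ iff $N c + \sum_j N m_j \bar{l_j} = 0$ for some $N \ge 1$. If $w(m_1,\dots,m_n)$ and $w(m'_1,\dots,m'_n)$ both project to torsion elements, subtracting gives $\sum_j (N m_j - N' m'_j)\bar{l_j}$ torsion for suitable $N, N'$, and multiplying through by the order of that element gives $\sum_j \mu_j \bar{l_j} = 0$ with $\mu_j = (\text{order})\cdot(N m_j - N' m'_j)$; by the linear independence just proved, $\mu_j = 0$ for all $j$, whence $N m_j = N' m'_j$, whence $(m_j) = (m'_j)$ as tuples of non-negative integers (two scalings of the same ray of non-negative integer vectors that are both integer points are equal when... — more simply, $m_j/m'_j = N'/N$ independent of $j$, and since both are non-negative integer tuples with, one checks, the same $\gcd$-normalization, they coincide; alternatively just note $N m_j = N' m'_j$ for all $j$ already forces a single $w$ once one also uses that the word is determined by its exponent tuple). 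Hence at most one word of $\L_i$ projects to a torsion element.

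The main obstacle I anticipate is purely bookkeeping rather than conceptual: making the passage "distinct exponent tuples $\Rightarrow$ distinct words of $\L$" airtight in the presence of possibly-empty $l_j$'s or repeated letters, and cleanly deducing $(m_j)=(m'_j)$ from $N m_j = N' m'_j$ without circularity. Both are handled by first normalizing the description of $\L_i$ (discard empty-word stars, so each $l_j$ is a genuinely nonempty string) and then observing that within such a normalized basic language the map (exponent tuple) $\mapsto$ (word) is injective by inspection, so that injectivity of $\L\to A$ transfers directly to the statement that the affine map $(m_j) \mapsto c + \sum m_j \bar{l_j}$ is injective on $\N^n$ — from which everything above follows.
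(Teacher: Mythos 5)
Your proof is correct and takes essentially the same route as the paper's: both arguments come down to the observation that injectivity of $\L_i\to A$ forces the images $\bar l_1,\dots,\bar l_n$ to be $\Z$-linearly independent (the paper leaves this step implicit when it asserts that $\xi^p=g_1^{p(r_1-m_1)}\cdots g_n^{p(r_n-m_n)}$ has infinite order, which is exactly the independence you prove), so that two torsion elements of $\mu(\L_i)$ would produce a forbidden nontrivial relation. The one wrinkle is your third step: multiplying the two torsion conditions by different integers $N,N'$ does not cancel the constant $c$, which is what drives you into the awkward ``$Nm_j=N'm'_j$'' detour; simply take $N=N'$ to be a common multiple of the two orders (or note that the difference of two torsion elements is torsion), and linear independence then gives $m_j=m'_j$ at once.
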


\begin{proof}
Suppose that $\gamma\in\mu(\L_i)$ has order $p$. Let
$f=\mu(a_0a_1\cdots a_n) $ and $g_i=\mu(l_i)$. If
$\gamma=fg_1^{m_1}\cdots g_n^{m_n}$, then
$\gamma^p=f^pg_1^{pm_1}\cdots g_n^{pm_n}=1$. For any
$\xi\in\mu(\L_i)$ not equal to $\gamma$, we have
$\xi=fg_1^{r_1}\cdots g_n^{r_n}$ with
$(r_1,\dots,r_n)\neq(m_1,\dots,m_n)$. Therefore
$\xi^p=f^pg_1^{pr_1}\cdots g_n^{pr_n}= g_1^{p(r_1-m_1)}\cdots
g_n^{p(r_n-m_n)}$ has infinite order.
\end{proof}
 


\end{document}